\newtheorem{theorem}{Theorem}[section]
\newtheorem{proposition}[theorem]{Proposition}
\newtheorem{lemma}[theorem]{Lemma}
\newtheorem{corollary}[theorem]{Corollary}
\theoremstyle{definition} 
\newtheorem{remark}[theorem]{Remark}
\newtheorem{example}[theorem]{Example}
\renewenvironment{proof}{{\noindent\bfseries Proof.~}}{\qed} 
\newcommand{\im}{\operatorname{im}}
\newcommand{\diag}{\operatorname{diag}}
\newcommand{\rank}{\operatorname{rank}}
\newcommand{\conv}{\operatorname{conv}}
\newcommand{\Gr}{\operatorname{Gr}}
\newcommand{\row}{\operatorname{row}}
\DeclareMathOperator{\RQ}{RQ}
\DeclareMathOperator{\vol}{vol}
\title{Uniform density in matroids, matrices and graphs}
\author[1]{Karel Devriendt}
\author[2]{Raffaella Mulas}
\affil[1]{Max Planck Institute for Mathematics in the Sciences, Leipzig, Germany}
\affil[2]{Vrije Universiteit Amsterdam, Amsterdam, The Netherlands}
\date{}
\begin{document}

\maketitle 
\begin{abstract}
We give new characterizations for the class of uniformly dense matroids and study applications of these characterizations to graphic and real representable matroids. We show that a matroid is uniformly dense if and only if its base polytope contains a point with constant coordinates. As a main application, we derive new spectral, structural and classification results for uniformly dense graphs. In particular, we show that connected regular uniformly dense graphs are $1$-tough and thus contain a (near-)perfect matching. As a second application, we show that strictly uniformly dense real represented matroids can be represented by projection matrices with a constant diagonal and that they are parametrized by a subvariety of the Grassmannian. 
\\~\\
\textbf{AMS subject classification:} \emph{15A03, 05C42, 05C50, 05C75, 05B35, 52B40}
\end{abstract}
 
\section{Introduction}
Matroids were introduced by Whitney and Nakasawa as an abstraction and generalization of independence in linear algebra (linear independence) and graph theory (acyclicity), and they have a rich combinatorial, geometric and representation theory; a standard reference is \cite{oxley_2011_matroid}. A \emph{matroid}\footnote{ 
We note that this is one of many alternative ways, so-called cryptomorphisms, to define matroids. We refer to \cite{oxley_2011_matroid} for an overview of the most common definitions.} $M=(E,\mathcal{B})$ is a nonempty collection $\mathcal{B}$ of subsets of a ground set $E$ that satisfy:
\begin{equation}\label{eq: base-exchange axiom}
\text{for all~} A,B\in \mathcal{B} \text{~and all $a\in A\backslash B$, there exists $b\in B\backslash A$ such that $(A\backslash \lbrace a\rbrace)\cup\lbrace b\rbrace \in \mathcal{B}$}.
\end{equation}
Elements of $\mathcal{B}$ are called the \emph{bases} of the matroid and an important consequence of property \eqref{eq: base-exchange axiom} is that every basis has the same cardinality, which is called the \emph{rank of the matroid}. The \emph{rank} can be extended to a function on subsets of the ground set as
$$
\rank(A) := \max\lbrace \vert B\cap A\vert \,:\, \text{$B$ is a basis}\rbrace, \quad\text{for all $A\subseteq E$}.
$$
Following the terminology of \cite{catlin_1992_arboricity}, we define the \emph{density} of a subset of the ground set as
$$
\rho(A)\,:=\, \frac{\vert{A}\vert}{\rank(A)}, \quad\text{~for all $A\subseteq E$},
$$
and set by definition $\rho(A)=\infty$ when $\rank(A)=0$, and $\rho(\emptyset)=1$. In this article, we will often work with \emph{loopless matroids}, which have $\rho(A)<\infty$ for all subsets (see Theorem \ref{th: no loops or coloops}). We write $\rho=\rho(M):=\rho(E)$, and call this the {density} of the matroid. A matroid is called \emph{uniformly dense} if $\rho(A)\leq \rho(E)$ for all subsets $A\subseteq E$. 
\\~\\
Narayanan and Vartak \cite{narayanan_1981_elementary} introduced uniformly dense matroids in the context of certain electrically-inspired partitions of graphs and matroids, now known as principal partitions \cite{fujishige_2009_theory}, and studied their basic properties in \cite{narayanan_1981_molecular}. Catlin, Grossman, Hobbs and Lai \cite{catlin_1992_arboricity} introduced the name ``uniformly dense matroids'' and showed that they are closely related to {fractional arboricity} --- this is the smallest $\alpha$ such that any collection of bases that covers every element of the ground set at least $t$ times, has size at least $\alpha\times t$; see \cite{payan_1986_graphes} --- and {strength} --- this is the largest $\beta$ such that the rank of any subset of size $\vert E\vert-k$ is at least $\rank(E)-k/\beta$; see \cite{gusfield_1983_connectivity, cunningham_1985_optimal}. Catlin et al.\ showed that uniformly dense matroids are precisely those matroids for which strength and fractional arboricity coincide. Independently, uniformly dense matroids appeared in a conjecture on orderings of the bases of a matroid \cite{Kajitani_1988_ordering}; the conjecture says that a matroid $M$ is uniformly dense if and only if there exists an ordering of $E$ such that all $\rank(M)$ cyclically consecutive elements form a basis. Van den Heuvel and Thomass\'{e}e \cite{vandenheuvel_2012_cyclic} proved this conjecture for matroids whose size and rank are coprime, McGuinness proved that it holds for all paving matroids \cite{mcguiness_2024_cyclic}, and B\`{e}rczi, J\'{a}nosik and M\'{a}trav\"{o}lgyi proved that it holds for all split matroids \cite{berczi_2024_cyclic}. One further context in which uniform density appears is as a combinatorial encoding of (semi-)stability, as it appears in geometric invariant theory. As pointed out in \cite{geiger_2022_self-dual}, a collection of points in projective space is stable (respectively semistable) if and only if the corresponding representable matroid is uniformly dense (respectively strictly uniformly dense); see also \cite[Ch. II, Thm. 1]{dolgachev_1989_point} for details.
\\
~
\\
The analysis of uniformly dense matroids in this article starts from the observation that the density inequalities $\rho(A)\leq \rho(E)$ can be interpreted geometrically as the hyperplane inequalities of a certain polyhedron, the base polytope $P(M)$ associated with the matroid $M$
\begin{align}
P(M) &:= \conv\left(\left\lbrace e_B\,:\,\text{$B$ is a basis}\right\rbrace\right)\subset \mathbb{R}^{\vert E\vert} \nonumber
\\
&= \left\lbrace x\in\Delta_M \,:\, \sum_{e\in A}x_e \leq \rank(A) \text{~for all $\emptyset\neq A\subseteq E$}\right\rbrace,\label{eq: definition matroid polytope}
\end{align}
where $e_B$ is a $0$--$1$ indicator vector with entries $1$ for the elements in $B$, and with the hypersimplex $\Delta_M:=\left\lbrace x\in\mathbb{R}^{\vert E\vert}_{\geq 0} \,:\, \sum_e x_e=\rank(M)\right\rbrace$. Matroid polytopes were introduced by Edmonds \cite{edmonds_1970_submodular} in the context of combinatorial optimization and independently by Gelfand, Goresky, MacPherson and Serganova \cite{gelfand_1987_combinatorial} while studying cell decompositions of the Grassmannian. The halfspace description of $P(M)$ in \eqref{eq: definition matroid polytope} can be found in \cite{feichtner_2005_matroid}. Our new characterization of uniformly dense matroids follows immediately from the definition of the matroid polytope.
\begin{restatable}{theorem}{characterization}\label{th: main characterization theorem}
The following are equivalent for a matroid $M$:
\begin{enumerate}
    \item[(1)] $P(M)$ contains the point $(\rho^{-1},\dots,\rho^{-1})$.

    \item[(2)] $\rho(A)\leq \rho(E)$ for all subsets $A \subseteq E$.

    \item[(3)] There exists a measure $\mu$ on $\mathcal{B}$ such that $\mu(\lbrace B : B\ni e\rbrace)$ is equal for all $e\in E$.
\end{enumerate}
A matroid that satisfies these conditions is called \emph{uniformly dense}.
\end{restatable}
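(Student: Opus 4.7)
The plan is to read off the three conditions directly from the two standard descriptions of the base polytope: the halfspace description \eqref{eq: definition matroid polytope} gives the equivalence $(1)\Leftrightarrow(2)$, and the vertex description $P(M)=\conv\{e_B\}$ gives the equivalence $(1)\Leftrightarrow(3)$. Both equivalences reduce to a short calculation once we substitute $x=(\rho^{-1},\dots,\rho^{-1})$ into the relevant description.

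For $(1)\Leftrightarrow(2)$, I first verify that the candidate point lies in the hypersimplex $\Delta_M$: since $\rho=|E|/\rank(M)$, the coordinates sum to $|E|\rho^{-1}=\rank(M)$. Then for a nonempty $A\subseteq E$ the constraint $\sum_{e\in A}x_e\le \rank(A)$ from \eqref{eq: definition matroid polytope} becomes $|A|/\rho\le \rank(A)$, which is exactly $\rho(A)\le\rho$. So $(\rho^{-1},\dots,\rho^{-1})$ belongs to $P(M)$ iff all density inequalities hold.

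For $(1)\Leftrightarrow(3)$, I use that a point lies in $P(M)$ iff it is a convex combination of the basis indicators $e_B$. If $(\rho^{-1},\dots,\rho^{-1})=\sum_{B}\lambda_B e_B$ with $\lambda_B\ge 0$ and $\sum_B\lambda_B=1$, then the measure $\mu(B):=\lambda_B$ satisfies $\mu(\{B:B\ni e\})=\sum_{B\ni e}\lambda_B=\rho^{-1}$, which is constant. Conversely, given any measure $\mu$ on $\mathcal{B}$ with $\mu(\{B:B\ni e\})=c$ for all $e$, a double count yields
\[
c\cdot|E|=\sum_{e\in E}\mu(\{B:B\ni e\})=\sum_{B\in\mathcal{B}}|B|\,\mu(B)=\rank(M)\,\mu(\mathcal{B}),
\]
so $c/\mu(\mathcal{B})=\rank(M)/|E|=\rho^{-1}$; normalizing $\mu$ to a probability measure therefore produces the convex combination witnessing $(1)$.

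Since both directions of both equivalences are direct translations between inequalities/convex combinations, there is no real obstacle: the main content is the well-known halfspace description \eqref{eq: definition matroid polytope}, which we are given, together with the double-counting identity above to match normalizations. The only care needed is making sure the hypersimplex condition is handled correctly when moving between a general (unnormalized) measure in $(3)$ and a probability measure coming from a convex combination in $(1)$.
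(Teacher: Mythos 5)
Your proof is correct and takes essentially the same approach as the paper: read off $(1)\Leftrightarrow(2)$ from the halfspace description and $(1)\Leftrightarrow(3)$ from the vertex description. Your version is marginally more careful in two respects the paper glosses over: you explicitly check that $(\rho^{-1},\dots,\rho^{-1})$ lies in the hypersimplex $\Delta_M$, and for $(3)\Rightarrow(1)$ you explicitly carry out the double-counting identity (which the paper states separately as Lemma~\ref{lemma: foster euler marginal sum}) to show the constant $c$ must equal $\rho^{-1}\mu(\mathcal{B})$, so that normalizing a general measure yields the required convex combination.
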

\begin{proof}
$(1)\Leftrightarrow (2).$ The matroid polytope contains the point $(\rho^{-1},\dots,\rho^{-1})$ if and only if this point satisfies the hyperplane inequalities $\sum_{e\in A}\rho^{-1} \leq \rank(A)$, for all nonempty $A$. Equivalently, 
$$
\vert A\vert \cdot \rho^{-1} \leq \rank(A) \iff \rho(A)=\frac{\vert A\vert}{\rank(A)} \leq \rho =\rho(E).
$$

$(1) \Leftrightarrow (3).$ The matroid polytope contains the point $(\rho^{-1},\dots,\rho^{-1})$ if and only if this point is a convex combination of the vertices of $P$. Equivalently, there must exist nonnegative coefficients $\mu(B)$ that add up to one, such that $$\sum_{B\in\mathcal{B}}\mu(B) e_B = (\rho^{-1},\dots,\rho^{-1})$$ or, for each coordinate $e\in E$, 
$$
\sum_{B:B\ni e}\mu(B) = \rho^{-1}.
$$ 
These coefficients are the desired measure $\mu$ on $\mathcal{B}$. Since any measure can be normalized by dividing by $\mu(\mathcal{B})$, normalization is not necessary for the measure $\mu$ in condition \textit{(3)}.
\end{proof}

If the point $(\rho^{-1},\dots,\rho^{-1})$ in \textit{(1)} lies in the relative interior of $P(M)$, then the matroid is called \emph{strictly uniformly dense}. A measure that satisfies condition \textit{(3)} in Theorem \ref{th: main characterization theorem} will be called an \textit{$E$-uniform basis measure}. The normalized $E$-uniform basis measures associated with a uniformly dense matroid form a polytope: this is the intersection of the affine space 
$$
\left\{ \mu\in\mathbb{R}^{\vert\mathcal{B}\vert}\,:\,\sum_{B\ni e}\mu(B)=\rho^{-1}, ~\forall e\in E\right\}
$$ with the probability simplex. Several known results on uniformly dense matroids, for instance those in \cite{catlin_1992_arboricity} related to fractional covering and packing, the conjecture of Kajitani et al.\ in \cite{Kajitani_1988_ordering} as well as our Theorem \ref{th: real UD matroids have determinantal measures}, can be interpreted as statements about certain points in this polytope.

As a first application of Theorem \ref{th: main characterization theorem} we study graphic matroids, where $E$ are the edges of a simple graph $G$ and bases are spanning forests. The density of this matroid is denoted by $\rho(G)$ and a graph is said to be uniformly dense if the corresponding graphic matroid is uniformly dense. We study some implications of uniform density on the Laplacian spectrum of graphs and classify all connected uniformly dense graphic matroids with two cycles. Using $E$-uniform basis measures, we prove a connectivity result related to toughness: a graph $G$ is \emph{$t$-tough} if removing any $k$ vertices yields a graph with at most $k/t$ components \cite{bauer_2006_toughness}. Any graph with maximum degree $\Delta(G)$ is $(1/\Delta(G))$-tough, and for uniformly dense graphs we find the following improvement:
\begin{restatable*}{theorem}{toughness}\label{th: toughness of uniformly dense graphs}
A uniformly dense graph $G$ is $(\rho(G)/\Delta(G))$-tough. If, furthermore, $G$ is regular and connected, then it is $1$-tough. 
\end{restatable*}
We note that these types of vertex-based connectivity results are new in the context of uniform density, as compared to the existing edge-based connectivity results. This result also implies the existence of (near-)perfect matchings in regular uniformly dense graphs. The proof of Theorem \ref{th: toughness of uniformly dense graphs} is based on techniques of Fiedler \cite{fiedler_2011_matrices} and is given in Section \ref{subsection: connectivity for ud graphs}.

As a second application of Theorem \ref{th: main characterization theorem} we consider real representable matroids, where $E$ is the set of columns of a real matrix and bases are maximal linearly independent sets of columns. For these matroids, we show in Theorem \ref{th: real UD matroids have determinantal measures} that there exists an $E$-uniform measure of the form $\mu_X:B\mapsto\det(X_B)^2,$ given by the minors of some matrix $X$.

A projection representation of a matroid is a projection matrix whose non-singular principal submatrices determine the bases of the matroid. Using results from the theory of determinantal point processes (see e.g.\ \cite{lyons_2003_determinantal}) we show the following result in Section \ref{subsection: SUD real representable}:

\begin{restatable*}{theorem}{projectionRepresentation}\label{th: representable UD matroids and projection matrices}
Let $M$ be a real representable matroid. Then $M$ is strictly uniformly dense if and only if it has a projection representation with constant diagonal equal to $\rho(M)^{-1}$.
\end{restatable*}
As a final application, we show that strictly uniformly dense real representable matroids of size $n$ and rank $k$ can be parametrized by a subvariety $\mathcal{V}(n,k)$ of the Grassmannian, which parametrizes real representable matroids. This subvariety is further studied in follow-up work \cite{devriendt_2024_grassmannian}.
\\~\\
\textbf{Paper outline.} Section \ref{section: introduction uniformly dense matroid} recalls the definition of uniformly dense matroids, together with some examples. We describe basic properties and survey some relevant known results. 

Section \ref{section: graphic matroids} deals with uniformly dense graphic matroids. We introduce the required terminology and concepts from graph theory and give some examples. In Section \ref{subsection: structure of ud graphs} we study implications of uniform density on the structure of a graph and classify uniformly dense graphs with two cycles. Section \ref{subsection: connectivity for ud graphs} discusses connectivity of uniformly dense graphs with the toughness result Theorem \ref{th: toughness of uniformly dense graphs} as the main result. Section \ref{subsection: spectral results for ud graphs} finally describes results on the normalized Laplacian spectrum of uniformly dense graphs. 

Section \ref{section: real matroids} deals with real representable matroids. In Section \ref{subsection: SUD real representable} we consider strictly uniformly dense real representable matroids. We show that these matroids admit an $E$-uniform `determinantal' measure in Theorem \ref{th: real UD matroids have determinantal measures} and that they can be represented by a constant-diagonal projection matrix in Theorem \ref{th: representable UD matroids and projection matrices}. Section \ref{subsection: moduli space} introduces a variety that parametrizes strictly uniformly dense real matroids.

\section{Uniformly dense matroids}\label{section: introduction uniformly dense matroid}
\subsection{Definition and examples}
As introduced, a matroid $M=(E,\mathcal{B})$ consists of a nonempty set $\mathcal{B}$ that contains subsets of a finite ground set $E$, such that the base-exchange property \eqref{eq: base-exchange axiom} is satisfied. For convenience, we will often focus on matroids where every element of the ground set is contained in at least one element of $\mathcal{B}$; these are called loopless matroids. 

The elements $B\in\mathcal{B}$ are called the bases of $M$ and a fundamental consequence of the base-exchange property is that every basis has the same cardinality. This is called the \emph{rank} of $M$, and the rank of a set $A\subseteq E$ is defined as the size of the largest intersection of $A$ with any basis. A standard reference on matroid theory is \cite{oxley_2011_matroid}. We now define the ingredients for the definition of uniformly dense matroids.

\textbf{Density:} Narayanan and Vartak \cite{narayanan_1981_elementary} defined the density of a subset $A\subseteq E$ as $\rho(A):=\vert A\vert/\rank(A)$ and the density of a matroid as $\rho(M):=\rho(E)$. In this definition, we set $\rho(A)=\infty$ when $\rank(A)=0$ and set $\rho(\emptyset)=1$. The possible densities of a matroid are $\rho(M)\in\lbrace \vert E\vert/1,\dots,\vert E\vert/\vert E\vert\rbrace \cup\{\infty\}$ and Example \ref{ex: uniform matroids} shows that every density is realized by some matroid.

\textbf{Basis measure:} A measure $\mu$ on the bases of $M$ is determined by a nonnegative real number $\mu(B)$ for each basis $B$. The measure is additive and we write $\mu(\mathcal{A})=\sum_{B\in\mathcal{A}}\mu(B)$ for the measure of a set $\mathcal{A}\subseteq\mathcal{B}$ of bases. A measure $\mu$ is \emph{normalized} if $\mu(\mathcal{B})=1$, but this is not required in general. We note the following useful lemma on basis measures.
\begin{lemma}\label{lemma: foster euler marginal sum}
Every measure $\mu$ on $\mathcal{B}$ satisfies $$\sum_{e\in E}\mu(\lbrace B : B\ni e\rbrace)=\rank(M)\mu(\mathcal{B}).$$
\end{lemma}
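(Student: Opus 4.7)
The plan is to prove this by interchanging the order of summation and then using the fundamental fact, noted just before the lemma, that every basis of $M$ has the same cardinality, namely $\rank(M)$.

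Concretely, I would start from the left-hand side, expand the definition $\mu(\lbrace B:B\ni e\rbrace)=\sum_{B\ni e}\mu(B)$, and rewrite the double sum as a sum over pairs $(e,B)$ with $e\in B\in\mathcal{B}$:
$$
\sum_{e\in E}\mu(\lbrace B:B\ni e\rbrace)=\sum_{e\in E}\sum_{\substack{B\in\mathcal{B}\\B\ni e}}\mu(B)=\sum_{B\in\mathcal{B}}\sum_{e\in B}\mu(B)=\sum_{B\in\mathcal{B}}\vert B\vert\,\mu(B).
$$
Since every basis satisfies $\vert B\vert=\rank(M)$, the last sum equals $\rank(M)\sum_{B\in\mathcal{B}}\mu(B)=\rank(M)\mu(\mathcal{B})$, which is the desired identity.

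There is no real obstacle here: the lemma is essentially a double-counting identity (or a discrete Fubini), and the only matroid-theoretic ingredient is the equicardinality of bases, which has already been stated in the excerpt as a consequence of the base-exchange axiom \eqref{eq: base-exchange axiom}. The only thing to be slightly careful about is that $\mu$ is not assumed normalized, so the right-hand side must retain the factor $\mu(\mathcal{B})$ rather than collapse to $\rank(M)$.
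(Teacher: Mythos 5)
Your proof is correct and takes essentially the same route as the paper: swap the order of summation to count pairs $(e,B)$ with $e\in B$, then invoke the equicardinality of bases to replace $\vert B\vert$ with $\rank(M)$. Your write-up is, if anything, a touch more explicit about the interchange step than the paper's one-line chain.
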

\begin{proof}
$$
\sum_{e\in E}\mu(\lbrace B : B\ni e\rbrace) = \sum_{e\in E}\sum_{B:B\ni e}\mu(B) = \sum_{B\in\mathcal{B}}\mu(B)\vert B\vert = \rank(M)\mu(\mathcal{B}).
$$
\end{proof}

\textbf{Base polytope:} The base polytope $P(M)$ of a matroid is defined as the convex hull of indicator vectors of its bases $P(M)=\conv\left(\lbrace e_B \,:\, B\text{~is a basis}\rbrace\right)\subset\mathbb{R}^{\vert E\vert}.$ See Equation \eqref{eq: definition matroid polytope} or reference \cite{feichtner_2005_matroid} for the description of this polytope in terms of hyperplane inequalities. 

The following theorem from the introduction may be used as the definition of uniformly dense matroids.
\characterization*

If in \textit{(1)} the point lies in the relative interior of $P(M)$ or in \textit{(3)} the measure has full support, then we say that $M$ is \emph{strictly uniformly dense}. The second condition is a bit more subtle and is dealt with in Theorem \ref{theorem: rank conditions for strict uniform density} at the end of Section \ref{subsection: basic structure and operations}. We now give some examples of matroids and uniformly dense matroids.

\begin{example}[Tadpole matroid]\label{ex: tadpole matroid}
The tadpole matroid $T_4$ is the matroid with ground set $[4]$ and bases 123, 134, 124, where strings denote subsets. The name ``tadpole'' is clarified in Example \ref{ex: tadpole graph} and Figure \ref{fig: examples of uniformly dense graphs}. Since $\rho(234)=3/2 > 4/3 = \rho(1234)$, the tadpole matroid is not uniformly dense. Theorem \ref{th: no loops or coloops} will show that the fact that element $1$ is in every basis of $T_4$ is an obstruction for uniform density. The matroid polytope $P(T_4)$ is the convex hull of the points $$\lbrace(1,1,1,0),(1,1,0,1),(1,0,1,1)\rbrace\subset \mathbb{R}^4.$$ This is a regular triangle in the affine hyperplane $\lbrace x\in\mathbb{R}^4:x_1=1\rbrace$ and does not contain the point $(4/3,\dots,4/3)$.
\end{example}

\begin{example}[Uniform matroid]\label{ex: uniform matroids}
The \emph{uniform matroid} $U_{k,n}$ of size $n$ and rank $k$ is the matroid with ground set $E=[n]:=\lbrace 1,\dots,n\rbrace$ and bases $\mathcal{B}={[n]\choose k}:=\lbrace B\subseteq [n] : \vert B\vert=k\rbrace$ given by all $k$-element subsets of $[n]$. Since $\rho(U_{k,n})=n/k$, there exists a matroid with any rational density of at least 1. We note in particular that $\rho(U_{0,n})=\infty$ when $n>0$, and $\rho(U_{0,0})=1$.

Let $\mu:B\mapsto 1$ be the uniform measure on the bases of $U_{k,n}$, with $k>0$. Then $\mu(\lbrace B: B\ni e\rbrace)={n-1 \choose k-1}$ is independent of $e\in E$, which proves that these uniform matroids are uniformly dense. The matroid polytope of a uniform matroid is equal to the associated hypersimplex $P(U_{k,n})=\Delta_{U_{k,n}}$ and contains the point $(k/n,\dots,k/n)$. The rank zero uniform matroids $U_{0,n}$ and $U_{0,0}$ satisfy the conditions of Theorem \ref{th: main characterization theorem} either vacuously or trivially, and thus are uniformly dense.
\end{example}

\begin{example}[Self-dual matroids]
A matroid $M$ is \emph{self-dual} if $B\in\mathcal{B}\Leftrightarrow E\backslash B\in\mathcal{B}.$ In particular, self-dual matroids have density $\rho(M)=2$. For any fixed basis $A$, the basis measure 
$$\mu_A:B\mapsto\lbrace 1/2 \text{~if $B=A$~or~$B=E\backslash A$, and $0$ otherwise}\rbrace$$ is $E$-uniform, which proves that self-dual matroids are uniformly dense. Geiger, Hashimoto, Sturmfels and Vlad proved in \cite{geiger_2022_self-dual} that the base polytope of self-dual matroids contains the point $(1/2,\dots,1/2)$. They called this property ``stable" in reference to stability of point configurations.
\end{example}

\subsection{Basic structure and operations}\label{subsection: basic structure and operations}
We start by giving a characterization of two forcing structures in uniformly dense matroids. An element $e\in E$ that is contained in every basis of a matroid is called a \emph{coloop} and an element that is contained in no basis is called a \emph{loop}; the following result shows that we do not lose much by restricting to loopless matroids when studying uniform density.
\begin{theorem}\label{th: no loops or coloops}
If a uniformly dense matroid contains a coloop (respectively loop), then all elements must be coloops (respectively loops).
\end{theorem}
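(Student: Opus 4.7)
The plan is to apply the characterization of Theorem~\ref{th: main characterization theorem}, specifically the polytope formulation $(1)$, which places the point $p := (\rho^{-1},\dots,\rho^{-1})$ inside the base polytope $P(M)$. The structural observation is that a coloop pins the corresponding coordinate of every vertex of $P(M)$ to $1$, while a loop pins it to $0$; each situation will be incompatible with $p \in P(M)$ unless the matroid degenerates in the way the theorem predicts.

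For the coloop case, suppose $e_0 \in E$ is a coloop. Then every basis $B$ contains $e_0$, so the $e_0$-coordinate of each vertex $e_B$ of $P(M)$ equals $1$, and hence the $e_0$-coordinate of every point in the convex hull $P(M)$ also equals $1$. Applied to $p$ this gives $\rho^{-1} = 1$, i.e.\ $|E| = \rank(M)$. Since every basis $B$ satisfies $B \subseteq E$ and $|B| = \rank(M) = |E|$, we must have $B = E$. Thus $\mathcal{B} = \{E\}$ and every element of $E$ is a coloop.

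For the loop case, suppose $e_0 \in E$ is a loop. Then no basis contains $e_0$, so every vertex $e_B$, and therefore every point of $P(M)$, has $e_0$-coordinate $0$. For $p$ this forces $\rho^{-1} = 0$, which can only occur in the degenerate case $\rank(M) = 0$; there the only basis is $\emptyset$ and every element of $E$ is a loop. Alternatively, one can argue directly from condition $(2)$: the loop satisfies $\rho(\{e_0\}) = 1/0 = \infty$, so the density inequality $\rho(\{e_0\}) \le \rho(E)$ forces $\rho(E) = \infty$, i.e.\ $\rank(E) = 0$, with the same conclusion. I expect no real obstacle in executing this argument; the only delicate point is the degenerate interpretation of $\rho$ when $\rank = 0$, which must be treated explicitly in the loop case.
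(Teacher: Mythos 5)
Your proof is correct. Both arguments apply Theorem~\ref{th: main characterization theorem} and exploit the fact that a coloop (resp.\ loop) pins the corresponding coordinate of every vertex $e_B$ to $1$ (resp.\ $0$), but you route this through condition~(1) --- reading off the coordinate of the constant point $(\rho^{-1},\dots,\rho^{-1})\in P(M)$ --- whereas the paper routes it through condition~(3) --- reading off the marginal $\mu(\{B:B\ni e\})$ of an $E$-uniform measure and then invoking Lemma~\ref{lemma: foster euler marginal sum} to convert the common marginal value ($\mu(\mathcal{B})$ for a coloop, $0$ for a loop) into the rank conclusion $\rank(M)=|E|$ or $\rank(M)=0$. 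Your geometric version extracts the same conclusion directly from $\rho^{-1}\in\{0,1\}$ and $\rho=|E|/\rank(M)$, so it bypasses the auxiliary lemma and is marginally more self-contained; otherwise the two proofs are structurally parallel. Your caution about the degenerate $\rank=0$ case is well placed: under the paper's standing convention (every element lies in some basis, $\mathcal{B}\neq\emptyset$), the loop case of the theorem is formally vacuous and is stated only to justify that convention, which is exactly how the paper treats it as well.
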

\begin{proof} A coloop $\ell$ satisfies $\mu(\lbrace B:B\ni \ell \rbrace)=\mu(\mathcal{B})$ for any basis measure $\mu$. If $M$ is uniformly dense with $E$-uniform basis measure $\mu$, this implies $\mu(\lbrace B: B\ni e\rbrace)=\mu(\mathcal{B})$ for every $e\in E$, and by Lemma \ref{lemma: foster euler marginal sum}, that $\rank(M)=\vert E\vert$. This characterizes the uniform matroid $U_{n,n}=(E,E)$ in which every element is a coloop. If $M$ is uniformly dense and contains a loop $\hat{\ell}$, then $\infty=\rho(\hat{\ell})\leq \rho(M)$ implies that $\rank(M)=0$, which characterizes the matroid $U_{0,n}=(E,\emptyset)$ in which every element is a loop; since $M$ contains a loop it is not $U_{0,0}$.
\end{proof}

Thus, a uniformly dense matroid of size $n$ is either $U_{n,n}, U_{0,n}$, or it has no loops or coloops.\newline

Next, we show how uniform density behaves with respect to some common operations in matroid theory. Theorems~\ref{th: uniform density and duality} and \ref{th: uniform density and union} were proven by Narayanan and Vartak in \cite{narayanan_1981_molecular}, and Theorem \ref{th: uniform density and direct sum} is new to our knowledge. These results are not used in later sections, but they are included to illustrate that uniformly dense matroids are a well-behaved class of matroids. The first operation is duality: the \emph{dual} of a matroid $M=(E,\mathcal{B})$ is the matroid $$M^\star := (E,\lbrace E\backslash B : B\in \mathcal{B}\rbrace).$$
\begin{theorem}[{\cite[Thm. 4]{narayanan_1981_molecular}}]\label{th: uniform density and duality}
A matroid $M$ is uniformly dense if and only if its dual $M^\star$ is uniformly dense.
\end{theorem}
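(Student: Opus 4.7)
The plan is to invoke the characterization via $E$-uniform basis measures, namely condition \textit{(3)} of Theorem \ref{th: main characterization theorem}. Matroid duality is an involution, $M^{\star\star} = M$, so it suffices to prove one implication: starting from an $E$-uniform basis measure on $\mathcal{B}$, produce one on $\mathcal{B}^\star$. The natural candidate is the pushforward of $\mu$ along the canonical duality bijection $B \leftrightarrow E\setminus B$ between $\mathcal{B}$ and $\mathcal{B}^\star$.

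Concretely, given a normalized $E$-uniform measure $\mu$ on $\mathcal{B}$ (so $\mu(\mathcal{B}) = 1$ and $\mu(\{B : B \ni e\}) = \rho(M)^{-1}$ for every $e\in E$ by Theorem \ref{th: main characterization theorem}), I would define $\mu^\star$ on $\mathcal{B}^\star$ by $\mu^\star(E\setminus B) := \mu(B)$. The key observation is that for any fixed $e \in E$, the bases $B^\star \in \mathcal{B}^\star$ containing $e$ correspond precisely to the bases $B \in \mathcal{B}$ \emph{not} containing $e$, so
$$\mu^\star\bigl(\{B^\star \in \mathcal{B}^\star : B^\star \ni e\}\bigr) = \mu(\mathcal{B}) - \mu\bigl(\{B \in \mathcal{B} : B \ni e\}\bigr) = 1 - \rho(M)^{-1},$$
which is independent of $e$. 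Hence $\mu^\star$ is $E$-uniform for $M^\star$, and condition \textit{(3)} of Theorem \ref{th: main characterization theorem} gives that $M^\star$ is uniformly dense.

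For a cleaner closing, I would verify that the common value $1-\rho(M)^{-1}$ is the expected $\rho(M^\star)^{-1}$: using the standard identity $\rank(M^\star) = |E| - \rank(M)$ from matroid duality, indeed $\rho(M^\star)^{-1} = (|E|-\rank(M))/|E| = 1-\rho(M)^{-1}$. There is essentially no obstacle here; once the right characterization is invoked, the proof reduces to transporting the measure along the duality bijection on bases. An entirely equivalent geometric route uses condition \textit{(1)}: the identity $P(M^\star) = (1,\dots,1) - P(M)$ immediately shows that $(\rho(M)^{-1},\dots,\rho(M)^{-1}) \in P(M)$ if and only if its reflection $(1-\rho(M)^{-1},\dots,1-\rho(M)^{-1}) = (\rho(M^\star)^{-1},\dots,\rho(M^\star)^{-1})$ lies in $P(M^\star)$.
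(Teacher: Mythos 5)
Your proof is correct and follows essentially the same route as the paper: transport the measure along the bijection $B \leftrightarrow E\setminus B$ and observe that $\mu^\star(\{B^\star \ni e\}) = \mu^\star(\mathcal{B}^\star) - \mu(\{B\ni e\})$, so $E$-uniformity is preserved. The paper phrases the computation symmetrically so both directions drop out at once rather than invoking involutivity, and it does not bother normalizing, but these are cosmetic differences; your closing sanity check on $\rho(M^\star)^{-1}$ and the polytope-complementation alternative are pleasant additions but not part of the paper's argument.
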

\begin{proof} We give a new proof based on our new characterization; the proof in \cite{narayanan_1981_molecular} is different but very short as well. Let $M$ be a matroid and $M^\star$ its dual. The bijection between $\mathcal{B}$ and $\mathcal{B}^\star$ via $B^\star=E\backslash B$ induces a bijection between measures $\mu$ on $\mathcal{B}$ and $\mu^\star$ on $\mathcal{B}^\star$. We find
\begin{align*}
\sum_{B\in\mathcal{B}:B\ni e}\mu(B) = \sum_{B\in\mathcal{B}:B\ni e}\mu^\star(E\backslash B) = \sum_{B^\star\in\mathcal{B}^\star:B^\star\not\ni e}\mu^\star(B^\star) = \mu^\star(\mathcal{B}^\star)-\sum_{B^\star\in\mathcal{B}^\star:B^\star\ni e}\mu^\star(B^\star),
\end{align*}
where the last step uses additivity of the measure over the partition $\mathcal{B}^\star=\lbrace B^\star : B^\star\ni e\rbrace\sqcup\lbrace B^\star: B^\star\not\ni e\rbrace.$ Since $\mu^\star(\mathcal{B}^\star)$ is independent of $e$, the measure $\mu$ is $E$-uniform if and only if $\mu^\star$ is $E$-uniform and thus $M$ is uniformly dense if and only $M^\star$ is.
\end{proof}

The \emph{union} of two matroids $M_1=(E_1,\mathcal{B}_1)$ and $M_2=(E_2,\mathcal{B}_2)$ is the matroid $$M_1\lor M_2 := (E_1\cup E_2,\max\lbrace B_1\cup B_2 : B_1\in\mathcal{B}_1,B_2\in\mathcal{B}_2\rbrace),$$ where `$\max$' is taken with respect to the subset partial order. 

\begin{theorem}[{\cite[Cor. 1]{narayanan_1981_molecular}}]\label{th: uniform density and union}
The union of two uniformly dense matroids $M_1$ and $M_2$ on the same ground set $E$ is a uniformly dense matroid $M_1\lor M_2$ on $E$.
\end{theorem}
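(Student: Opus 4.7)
The plan is to verify condition (2) of Theorem \ref{th: main characterization theorem} directly, that is, to show $\rho_{M_1\lor M_2}(A)\le \rho(M_1\lor M_2)$ for every nonempty $A\subseteq E$. To control the rank of the union I would invoke the classical matroid union theorem of Edmonds and Nash-Williams,
\[
\rank_{M_1\lor M_2}(A)=\min_{S\subseteq A}\bigl(\rank_{M_1}(S)+\rank_{M_2}(S)+|A\setminus S|\bigr),
\]
and combine it with the lower bound $\rank_{M_i}(S)\ge |S|\,\rank(M_i)/|E|$ that follows from applying condition (2) to each $M_i$.

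Writing $r_i=\rank(M_i)$, $r=\rank(M_1\lor M_2)$ and $n=|E|$, substitution yields
\[
\rank_{M_1\lor M_2}(A)\ \ge\ |A|+\min_{S\subseteq A}|S|\left(\frac{r_1+r_2}{n}-1\right).
\]
The inner expression is linear in $|S|$, so its minimum over $S\subseteq A$ is attained at an endpoint and equals $|A|\cdot\min\bigl(1,(r_1+r_2)/n\bigr)$: if $r_1+r_2\ge n$ the minimum is at $|S|=0$ and equals $|A|$, and otherwise it is at $|S|=|A|$ and equals $|A|(r_1+r_2)/n$.

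To finish, I would use the elementary bounds $r\le n$ and $r\le r_1+r_2$ (the latter because a basis of the union is contained in $B_1\cup B_2$ for suitable $B_i\in\mathcal{B}_i$) to obtain $r/n\le \min\bigl(1,(r_1+r_2)/n\bigr)$, and hence $\rank_{M_1\lor M_2}(A)\ge |A|\,r/n$. Rearranging gives the desired density inequality $\rho_{M_1\lor M_2}(A)\le n/r=\rho(M_1\lor M_2)$, proving that $M_1\lor M_2$ is uniformly dense.

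The only nontrivial external input is the matroid union theorem; the remaining content is the one-variable optimization above combined with the two easy rank bounds, so I do not anticipate a real obstacle. An equivalent route would be to observe that $(\rho(M_i)^{-1},\dots,\rho(M_i)^{-1})\in P(M_i)$ and then sum and rescale by $r/(r_1+r_2)\le 1$ inside the independence polytope of $M_1\lor M_2$, but this amounts to the same computation dressed up polytopally.
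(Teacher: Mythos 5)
The paper states Theorem~\ref{th: uniform density and union} with a citation to Narayanan--Vartak and gives no proof of its own, so there is nothing internal to compare against; your argument supplies a clean, self-contained proof. The chain of reasoning is sound: the Edmonds--Nash-Williams formula $\rank_{M_1\lor M_2}(A)=\min_{S\subseteq A}(\rank_{M_1}(S)+\rank_{M_2}(S)+|A\setminus S|)$, combined with the pointwise lower bound $\rank_{M_i}(S)\ge |S|\,r_i/n$ from uniform density of each $M_i$, reduces the minimization to a function of $|S|$ alone; linearity forces the minimum to an endpoint, giving $\rank_{M_1\lor M_2}(A)\ge |A|\cdot\min\bigl(1,(r_1+r_2)/n\bigr)$; and the elementary bounds $r\le n$ and $r\le r_1+r_2$ then give $\rank_{M_1\lor M_2}(A)\ge |A|\,r/n$, which is exactly condition~\emph{(2)} of Theorem~\ref{th: main characterization theorem}. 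One small point worth spelling out: the bound $\rank_{M_1\lor M_2}(\{e\})\ge r/n>0$ also certifies that $M_1\lor M_2$ is loopless, so the densities appearing in condition~\emph{(2)} are well-defined under the paper's standing convention. Your closing remark about the ``polytopal'' alternative is accurate: since $P_I(M_1\lor M_2)=\bigl(P_I(M_1)+P_I(M_2)\bigr)\cap[0,1]^E$ is precisely the polyhedral form of the matroid union theorem, summing the constant points of $P(M_1)$, $P(M_2)$ and rescaling by $r/(r_1+r_2)$ encodes the same two inequalities and the same optimization.
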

The following example shows that the converse of Theorem \ref{th: uniform density and union} does not hold: the matroid $M=([3],\lbrace 12,13\rbrace)$ is not uniformly dense but the union $M\lor M=U_{3,3}$ is uniformly dense.

The \emph{intersection} of two matroids $M_1$ and $M_2$ is the matroid 
$$
M_1\land M_2 := (M_1^\star \lor M_2^\star)^\star.
$$
It follows immediately from Theorems \ref{th: uniform density and duality} and \ref{th: uniform density and union} that intersections preserve uniform density.
\begin{corollary}
The intersection of two uniformly dense matroids $M_1$ and $M_2$ on the same ground set is a uniformly dense matroid $M_1\land M_2$ on $E$.
\end{corollary}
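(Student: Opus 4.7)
The plan is to simply chain together the two preceding theorems via the definition $M_1 \land M_2 := (M_1^\star \lor M_2^\star)^\star$. Specifically, I would first invoke Theorem \ref{th: uniform density and duality} on each of $M_1$ and $M_2$ to conclude that their duals $M_1^\star$ and $M_2^\star$ are uniformly dense. Since $M_1^\star$ and $M_2^\star$ live on the same ground set $E$, Theorem \ref{th: uniform density and union} then applies and gives that their union $M_1^\star \lor M_2^\star$ is a uniformly dense matroid on $E$. A final application of Theorem \ref{th: uniform density and duality} to this union yields that its dual, which is by definition $M_1 \land M_2$, is uniformly dense on $E$.

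There is essentially no obstacle to overcome here: the argument is a three-step symbolic manipulation $M \mapsto M^\star \mapsto (\cdot)\lor(\cdot) \mapsto (\cdot)^\star$ in which each arrow is justified by one of the two quoted theorems. The only thing worth verifying, which I would do in a single sentence, is that the ground set condition for Theorem \ref{th: uniform density and union} is met: both $M_1^\star$ and $M_2^\star$ share the same ground set as $M_1$ and $M_2$ respectively (duality preserves the ground set), so since $M_1$ and $M_2$ share $E$, so do $M_1^\star$ and $M_2^\star$. Similarly, the union lives on $E$ and so its dual does too, giving the desired conclusion that $M_1 \land M_2$ is a uniformly dense matroid on $E$.

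If I wanted to flesh the argument out slightly for the reader, I could also mention in one line that an alternative proof goes directly via Theorem \ref{th: main characterization theorem}(3): starting from $E$-uniform basis measures $\mu_1^\star, \mu_2^\star$ on $M_1^\star, M_2^\star$ (obtained by the bijective correspondence $B \leftrightarrow E \setminus B$ from the $E$-uniform measures $\mu_1,\mu_2$ on $M_1,M_2$), one constructs an $E$-uniform measure on $M_1^\star \lor M_2^\star$ via the union construction and then dualizes once more. But since the corollary is explicitly stated as an immediate consequence of the two prior theorems, the clean three-line proof above is the one I would present.
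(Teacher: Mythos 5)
Your proof is correct and matches exactly the argument the paper intends: the corollary is stated as following ``immediately'' from Theorems \ref{th: uniform density and duality} and \ref{th: uniform density and union}, and your chain (dualize each $M_i$, take the union, dualize back) is precisely that unwinding of the definition $M_1 \land M_2 = (M_1^\star \lor M_2^\star)^\star$.
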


The \emph{direct sum} of two matroids $M_1=(E_1,\mathcal{B}_1)$ and $M_2=(E_2,\mathcal{B}_2)$ with disjoint ground sets is the matroid $$M_1\oplus M_2 := (E_1\cup E_2, \lbrace B_1\cup B_2: B_1\in\mathcal{B}_1,B_2\in\mathcal{B}_2\rbrace).$$ In other words, the direct sum is the union of matroids on disjoint ground sets.
\begin{theorem}\label{th: uniform density and direct sum}
The direct sum $M_1\oplus M_2$ of two matroids is uniformly dense if and only if $M_1$ and $M_2$ are uniformly dense with equal density $\rho(M_1)=\rho(M_2)=\rho(M_1\oplus M_2)$.
\end{theorem}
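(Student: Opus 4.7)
The plan is to reduce the statement to characterization (1) of Theorem \ref{th: main characterization theorem} via the product structure of base polytopes. First I would observe that the bases of $M_1\oplus M_2$ are exactly the disjoint unions $B_1\sqcup B_2$ with $B_i\in\mathcal{B}_i$, so under the natural identification $\mathbb{R}^{E_1\cup E_2}\cong \mathbb{R}^{E_1}\times\mathbb{R}^{E_2}$ the indicator vectors satisfy $e_{B_1\sqcup B_2}=(e_{B_1},e_{B_2})$. Taking convex hulls yields the key factorization
\[
P(M_1\oplus M_2) \;=\; P(M_1)\times P(M_2).
\]
This reduces membership of any point in $P(M_1\oplus M_2)$ to simultaneous membership of its two coordinate blocks in $P(M_1)$ and $P(M_2)$.

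Next, set $n_i:=|E_i|$, $k_i:=\rank(M_i)$, and $\rho:=\rho(M_1\oplus M_2)=(n_1+n_2)/(k_1+k_2)$. By Theorem \ref{th: main characterization theorem}(1), $M_1\oplus M_2$ is uniformly dense iff the constant vector $(\rho^{-1},\dots,\rho^{-1})$ lies in $P(M_1)\times P(M_2)$, iff its restriction to each $\mathbb{R}^{E_i}$ lies in $P(M_i)$. For the restriction to lie in $P(M_i)$ one must first satisfy the hypersimplex equation $\sum_{e\in E_i}\rho^{-1}=k_i$, that is $n_i\rho^{-1}=k_i$, equivalently $\rho(M_i)=n_i/k_i=\rho$. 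Provided this normalization holds, the same Theorem \ref{th: main characterization theorem}(1) applied to $M_i$ says the restriction lies in $P(M_i)$ iff $M_i$ itself is uniformly dense.

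For the forward direction I would combine these two observations: uniform density of the direct sum forces $\rho(M_1)=\rho(M_2)=\rho$ and uniform density of each summand. For the backward direction, if $\rho(M_1)=\rho(M_2)=:c$, then $n_i=ck_i$ gives $(n_1+n_2)/(k_1+k_2)=c$, so $c=\rho$; uniform density of each $M_i$ then places $(\rho^{-1},\dots,\rho^{-1})$ in each $P(M_i)$, hence in the product, so $M_1\oplus M_2$ is uniformly dense.

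The proof is essentially mechanical once the base polytope factorization is in hand, so there is no serious obstacle; the only point requiring any care is the bookkeeping that the common density $\rho(M_1)=\rho(M_2)$ automatically agrees with $\rho(M_1\oplus M_2)$, which is what makes the three densities in the statement all coincide. One could alternatively phrase the entire argument via characterization (3): a basis measure on $\mathcal{B}_1\times\mathcal{B}_2$ has marginals on each $\mathcal{B}_i$, and $E$-uniformity combined with Lemma \ref{lemma: foster euler marginal sum} applied separately to each ground set forces the density equality and $E_i$-uniformity of each marginal. Either route works, but the polytope route is the most transparent.
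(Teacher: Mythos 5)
Your proof is correct and takes essentially the same route as the paper: both reduce the claim to characterization~(1) of Theorem~\ref{th: main characterization theorem} via the factorization $P(M_1\oplus M_2)=P(M_1)\times P(M_2)$, and both extract the density equality $\rho(M_1)=\rho(M_2)=\rho(M_1\oplus M_2)$ from the hypersimplex constraint that points of $P(M_i)$ have coordinate sum $\rank(M_i)$. Your write-up is somewhat more explicit about the bookkeeping (and notes an alternative via basis measures), but the underlying argument is identical.
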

\begin{proof}
From the definition of direct sum, it follows that the base polytope of a direct sum matroid is the cartesian product of the base polytopes of the summands, that is $P(M_1\oplus M_2)=\{ (x,y)\in\mathbb{R}^{\vert E_1\vert+\vert E_2\vert} : x\in P(M_1), y\in P(M_2) \}$. Hence, $P(M_1\oplus M_2)$ contains the constant vector $\rho(M_1\oplus M_2)^{-1}\cdot\mathbbm{1}$ if and only if the polytopes $P(M_1)$ and $P(M_2)$ contain the constant vectors $\rho(M_1\oplus M_2)^{-1}\cdot\mathbbm{1}$ of appropriate lengths. This is possible if and only if $\rho(M_1\oplus M_2)$ equals $\rho(M_1)$ and $\rho(M_2)$ since the base polytopes $P(M_1)$ and $P(M_2)$ lie in the hyperplane of vectors which sum to the rank of the matroid, so if these polytopes contain a constant vector then it must have values equal to the density of the matroid. This completes the proof.
\end{proof}

We now come back to the rank-based characterization of strict uniform density. A matroid is \emph{connected} if it cannot be written as the direct sum of two other matroids. Every matroid $M$ has a partition $E=E_1\sqcup \dots\sqcup E_\ell$ of its ground set such that $M=\bigoplus_{i=1}^\ell M_i$ where each summand $M_i$ is a connected matroid on $E_i$; these are called the \emph{connected components of $M$}.

\begin{theorem}\label{theorem: rank conditions for strict uniform density}
The following are equivalent for a matroid $M$:
\begin{itemize}
    \item[(1)] $P(M)$ contains the point $(\rho^{-1},\dots,\rho^{-1})$ in its relative interior.
    \item[(2)] $\rho(A)\leq \rho(E)$ for all subsets $A\subseteq E$, with strict inequality whenever $A$ is not the union of ground sets of connected components of $M$.
    \item[(3)] There exists a positive measure $\mu$ on $\mathcal{B}$ such that $\mu(\{ B: B\ni e\})$ is equal for all $e\in E$.
\end{itemize}
A matroid that satisfies these conditions is called \emph{strictly uniformly dense}.
\end{theorem}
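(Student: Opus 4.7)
The plan is to deduce this refinement of Theorem \ref{th: main characterization theorem} by tracking exactly which inequalities in the halfspace description \eqref{eq: definition matroid polytope} of $P(M)$ are tight on all of $P(M)$, and then invoking the standard fact that the relative interior of a polytope consists of those points in its affine hull at which all non-tight inequality constraints are satisfied strictly, or equivalently of those convex combinations of vertices in which every vertex carries a strictly positive coefficient.

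The equivalence (1) $\Leftrightarrow$ (3) is then immediate from the vertex description: applied to the vertex set $\{e_B : B \in \mathcal{B}\}$ of $P(M)$, the criterion ``every vertex carries a strictly positive coefficient'' is exactly the full-support condition on $\mu$, and the computation in the proof of Theorem \ref{th: main characterization theorem} that shows such a $\mu$ is automatically normalized carries over verbatim. So this part needs no new ideas beyond inserting the word ``positive''.

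For (1) $\Leftrightarrow$ (2), the key input is the identification of which subsets $A$ give an inequality that is tight on all of $P(M)$. The inequality $\sum_{e \in A} x_e \leq \rank(A)$ is tight at every vertex $e_B$, and hence on all of $P(M)$, if and only if every basis $B$ satisfies $|B \cap A| = \rank(A)$; this is the classical statement that $A$ is a \emph{separator} of $M$, i.e.\ a union of ground sets of connected components. Conversely, if $A$ is not a separator, the base-exchange property produces a basis for which the inequality is strict. Consequently the affine hull of $P(M)$ is cut out exactly by the equalities $\sum_{e \in E_i} x_e = \rank(M_i)$ for the connected components $M_i$, and the relative interior is the locus where all remaining inequalities from \eqref{eq: definition matroid polytope} are strict.

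Given this, (2) $\Rightarrow$ (1) is a straightforward verification for $p := (\rho^{-1},\dots,\rho^{-1})$: applying the equality case of (2) to $A = E_i$ gives $\rho(E_i) = \rho$, hence $|E_i| \rho^{-1} = \rank(M_i)$, so $p$ satisfies every equality defining $\mathrm{aff}(P(M))$; the strict inequalities $\rho(A) < \rho$ for non-separator $A$ translate to $\sum_{e \in A} p_e < \rank(A)$; and $p_e = \rho^{-1} > 0$. For the converse (1) $\Rightarrow$ (2), strict satisfaction at $p$ of each non-tight inequality from \eqref{eq: definition matroid polytope} yields $\rho(A) < \rho$ for every non-separator $A$, while for separators $A$ the equality $\rho(A) = \rho$ is forced by $p \in \mathrm{aff}(P(M))$. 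The main obstacle is really the separator characterization together with the description of $\mathrm{aff}(P(M))$; both are classical in matroid-polytope theory (see e.g.\ \cite{feichtner_2005_matroid}) and are worth a brief citation rather than a new proof.
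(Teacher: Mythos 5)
Your proposal is essentially the same as the paper's proof: both treat $(1)\Leftrightarrow(3)$ by simply upgrading the convex-combination argument to full support, and both reduce $(1)\Leftrightarrow(2)$ to identifying the affine hull of $P(M)$ with the intersection of the hyperplanes $\sum_{e\in E_i}x_e=\rank(M_i)$ for the connected components, citing the Feichtner--Sturmfels codimension result, so that the relative interior is precisely where the remaining (separator-free) face inequalities are strict. The one small imprecision worth noting is the phrase ``applying the equality case of (2) to $A=E_i$ gives $\rho(E_i)=\rho$'': condition (2) only asserts $\rho(E_i)\leq\rho$ for separators, and the equality $\rho(E_i)=\rho$ is a consequence rather than part of the hypothesis --- it follows because $\rho^{-1}$ is a weighted average of the $\rho(E_i)^{-1}$ and all of these are $\geq\rho^{-1}$, forcing equality throughout; with that reading your argument is correct.
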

\begin{proof}
The equivalence between \emph{(1)} and \emph{(3)} follows as in the proof of Theorem \ref{th: main characterization theorem}. To prove equivalence with \emph{(2)}, we use the fact, shown in \cite[Prop. 2.4]{feichtner_2005_matroid}, that $P(M)$ has codimension equal to the number of connected components of $M$. Let $M=\bigoplus_{i=1}^\ell M_i$ be a decomposition into connected components. From the definition of direct sum it follows that $\sum_{e\in E(M_i))} x_e=\rank(E(M_i)$ for all $x\in P(M)$ and for every connected component $M_i$. These are $\ell$ linearly independent equations for points in the base polytope $P(M)$. Since the codimension of $P(M)$ is $\ell$, any further independent face inequality $\sum_{e\in A}x_e\leq \rank(A)$ must determine a proper face of $P(M)$. Thus the constant point $(\rho^{-1},\dots,\rho^{-1})$ lies in the relative interior if and only if these proper face inequalities are strict, which translates to $\rho(A)<\rho(E)$ following the proof of Theorem \ref{th: main characterization theorem}, whenever $A$ is not a union of some of the $E(M_i)$'s. This completes the proof.
\end{proof}

\subsection{Testing uniform density}
As a direct corollary of Theorem~\ref{th: main characterization theorem} and of earlier characterizations due to Naranayan and Vartak \cite{narayanan_1981_molecular} and Catlin et al.\ \cite{catlin_1992_arboricity}, it follows that uniform density of a matroid can be tested efficiently, relative to an independence oracle that answers the query ``is $\rank(A)=\vert A\vert$ or not?". We include the result here, as we have not found it reported elsewhere in the literature.
\begin{corollary}\label{corollary: testing uniform density}
There is a polynomial-time algorithm that tests if a matroid $M$ is uniformly dense and, if it is not, outputs a density inequality that is violated.
\end{corollary}
\begin{proof}
Cunningham \cite{cunningham_1984_testing} developed a polynomial-time algorithm that tests if a vector $p$ lies in a given matroid basis polytope $P(M)$, and otherwise outputs a violating hyperplane inequality. This can be used to check if the vector $(\rho^{-1},\dots,\rho^{-1})$ lies inside $P(M)$ or not and thus if $M$ is uniformly dense. A second approach is to construct the matroid $M'$ which contains $\rank(M)$ copies of every element $e\in E(M)$ and then use Edmonds' matroid partitioning algorithm \cite{edmonds_1965_minimum} to test if $E(M')$ can be covered by $\rank(M')=\rank(M)$ bases of $M'$.
\end{proof}

\section{Graphic matroids}\label{section: graphic matroids}
We now consider matroids which are associated with graphs and study some implications of uniform density for the structure of these graphs. The main results are Theorem \ref{th: 2-cycle uniformly dense graphs}, which characterizes bicyclic connected uniformly dense graphs, Theorem \ref{th: toughness of uniformly dense graphs}, which gives a strong connectivity result for uniformly dense graphs and Theorem \ref{th: spectral characterization}, which gives a spectral characterization of uniformly dense graphs.

\subsection{Definition and examples}
A \emph{graph} $G=(V,E)$ consists of a finite set $V$ and a set $E\subseteq \lbrace \lbrace u,v\rbrace : u,v\in V\rbrace$ of $2$-element subsets of $V$. Elements of $V$ are called \emph{vertices} and elements of $E$ are called \emph{edges}. We will only consider \emph{simple graphs}, where every pair of vertices can appear at most once as an edge, and where edges from a vertex to itself are not allowed. For a subset $A\subseteq E$ of edges, we write $G[A]$ for the subgraph of $G$ obtained by removing all edges not in $A$ and any isolated vertices.

A \emph{spanning forest} in $G$ is a maximal subset of edges $T\subseteq E$ which contains no cycle; if $G$ is connected then so is $T$ and it is called a spanning tree. The collection of spanning forests of a graph determines a matroid
$$
M(G) = (E,\lbrace T\subseteq E\,:\, T \text{~is a spanning forest in $G$}\rbrace).
$$
$M(G)$ is called the \emph{cycle matroid} of $G$, and any matroid which can be realized as the cycle matroid of a (multi-)graph is called a \emph{graphic matroid}. In general, a single matroid can be realized by different graphs (as we shall see for instance in Example \ref{ex: tree graph}). 

The \emph{rank} of a graph is $\rank(G)=\rank(M(G))$, and the rank of a subset of edges is defined as the rank of the corresponding subset in $M(G)$. Graph theoretically, the rank of a graph is related to the number of components and the number of cycles in the graph. The \emph{number of components} $c(A)$ of a subset $A\subseteq E$ is defined as
$$
c(A) := \vert V\vert - \rank(A),
$$
and we write $c(G)$ for the number of components of a graph\footnote{The number of components $c(G)$ of a graph does not correspond to the number of components of the matroid $M(G)$. Instead, the number of components of $M(G)$ equals the number of \textit{biconnected} components of $G$ (see \cite[Prop. 4.1.7]{oxley_2011_matroid}); a biconnected component is a maximal subgraph without cut vertices.}. This terminology reflects that $c(A)$ is the number of connected components in $G[A]$; see \cite[1.3.8]{oxley_2011_matroid}. We note that $c(A)$ requires the information of the graph and not just the matroid; indeed, in general the number of vertices of a graph cannot be uniquely recovered from the matroid $M(G)$. A \emph{connected graph} is a graph $G$ with $c(G)=1$. The \emph{number of independent cycles} in a subset $A\subseteq E$ is defined as 
$$
\beta(A):=\vert A\vert - \rank(A),
$$ 
and equals the number of linearly independent cycles in the set $A$, i.e., the dimension of the cycle space of $G[A]$; we write $\beta=\beta(G)$ for the number of independent cycles in a graph. The density $\rho(G):=\rho(M(G))$ of a graph can thus be expressed as
\begin{equation}\label{eq: density of a graph}
\rho(G) = \frac{\vert E\vert}{\rank(G)} = \frac{\vert E\vert}{\vert V\vert - c(G)} = \frac{\vert E\vert}{\vert E\vert - \beta(G)}.    
\end{equation}

A \emph{uniformly dense graph} is a graph $G$ whose cycle matroid $M(G)$ is uniformly dense. Since the cycle matroid of a graph is the direct sum of the cycle matroids of the connected components of the graph, it follows from Theorem \ref{th: uniform density and direct sum} that a graph is uniformly dense if and only if each connected component is uniformly dense with equal density. In the rest of this section, we study properties of uniformly dense graphs.

\begin{remark}\label{rmk:sparse}
    The name ``density'' for matroids corresponds to what is classically known as density for simple graphs. A graph $G$ is called \emph{dense} if the number of edges is close to the maximal possible number of edges, and it is called \emph{sparse} otherwise. Hence, the complete graph is the most dense simple graph, and the further $G$ is from the complete graph, the sparser it is. Also, the denser $G$ is, the larger $\vert E\vert$ is compared to $\vert V\vert$, and therefore the larger $\rho(G)$ is, following \eqref{eq: density of a graph}. This gives the following interpretation of uniform density in graphs: a graph $G$ is uniformly dense if, for all nonempty $A\subseteq E$, the graph $G[A]$ is at most as dense as $G$. 
\end{remark}

\begin{example}\label{example: spanning trees of the 4 cycle with diagonal}
Figure \ref{fig: example of graphic matroid} shows a graph $G$ with $\vert V\vert=4$ vertices and $\vert E\vert=5$ edges. The graph is connected, that is, $c(G)=1$, and thus it has $\rank(G)=\vert V\vert-1 = 3$. On the right, all spanning forests are shown --- these are the sets of $3=\rank(G)$ edges that cover all vertices. The cycle matroid $M=M(G)$ corresponding to $G$ has ground set $E(M)=[5]$, given by the labeling of the edges of $G$, and $8$ bases $\mathcal{B}(M)=\lbrace 123,234,134,124,145,245,135,235\rbrace$ given by the spanning forests of $G$. The 3-element non-bases of $M$ are $\lbrace 125, 345\rbrace$ and correspond to the $3$-cycles in the graph.
\begin{figure}[h!]
    \centering
    \includegraphics[width=0.9\textwidth]{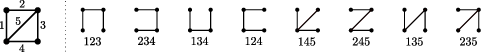}
    \caption{A graph $G$ and its spanning forests. The cycle matroid of $G$ has ground set $E=[5]$ and bases given by the three-element subsets indicated under the spanning forests.}
    \label{fig: example of graphic matroid}
\end{figure}
\end{example}

\begin{example}[Tadpole graph]\label{ex: tadpole graph} 
The tadpole graph (also called paw graph) shown in Figure \ref{fig: examples of uniformly dense graphs} looks like a tadpole and has the tadpole matroid $T_4$ as cycle matroid. As shown in Example \ref{ex: tadpole matroid}, the tadpole graph is not uniformly dense.
\end{example}

\begin{example}[Forest graph]\label{ex: tree graph}
The cycle matroid of a forest graph with $n$ edges is the uniform matroid $U_{n,n}$. This illustrates how one matroid ($U_{n,n}$) can correspond to multiple graphs (all forest graphs on $n$ edges). Since the uniform matroid is uniformly dense, forest graphs are uniformly dense. A \emph{cut edge} in a graph is an edge $e$ such that $c(E\backslash e)=c(E)+1$ and corresponds to a coloop in the cycle matroid. Following Theorem \ref{th: no loops or coloops}, a uniformly dense graph is either a forest graph or it contains no cut edges. A loop in a graph is an edge of the form $\lbrace v,v\rbrace$ and corresponds to a loop in the cycle matroid. Loops are excluded in our definition of graphs, but this is without much loss of generality when studying uniformly dense graphs due to Theorem \ref{th: no loops or coloops}.
\end{example}
\begin{example}[Edge-transitive graphs]\label{ex: edge-transitive graphs}
An \emph{automorphism} of a graph $G=(V,E)$ is a permutation $\pi$ of $V$ such that $$\lbrace \pi(u),\pi(v)\rbrace \in E \iff \lbrace u,v\rbrace\in E.$$ The automorphisms of a graph form a group $\operatorname{Aut}(G)$ and a graph is \emph{edge transitive} if $\operatorname{Aut}(G)$ acts transitively on $E$ via $\pi\lbrace u,v\rbrace=\lbrace \pi(u),\pi(v)\rbrace$. In other words, $G$ is edge transitive if for all $e,f\in E$ there exists an automorphism $\pi\in\operatorname{Aut}(G)$ such that $\pi(e)=f$. Narayanan and Vartak showed in \cite{narayanan_1981_molecular} that edge-transitive graphs are uniformly dense. A proof follows from the observation that the uniform measure on the bases of an edge-transitive graph is $E$-uniform. Figure \ref{fig: examples of uniformly dense graphs} shows some common examples of edge-transitive graphs: the cycle graph, complete graph and hypercube graph.
\end{example}
\begin{figure}[h!]
    \centering
    \includegraphics[width=0.8\textwidth]{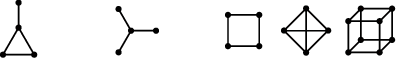}
    \caption{From left to right: (Example \ref{ex: tadpole graph}) The tadpole graph has density $\rho=4/3$ and is not uniformly dense. (Example \ref{ex: tree graph}) The tree graph has density $\rho=1$ and is uniformly dense. (Example \ref{ex: edge-transitive graphs}) The $4$-cycle graph has density $\rho=4/3$ and is uniformly dense, the complete graph on $4$ vertices has density $\rho=2$ and is uniformly dense and the hypercube graph of dimension $3$ has density $\rho=12/7$ and is uniformly dense. The latter four graphs are edge-transitive.}
    \label{fig: examples of uniformly dense graphs}
\end{figure}

\subsection{Structure}\label{subsection: structure of ud graphs}
To illustrate the implications of uniform density in graphs, we consider a number of structural properties. The \emph{degree} $\deg(v):=\vert\lbrace e\in E : e\ni v\rbrace\vert$ of a vertex $v\in V$ is equal to the number of edges that contain $v$; recall that loops are excluded, which otherwise should be counted twice in computing the degree. We write $\delta(G)$ for the smallest degree of any vertex, and $\Delta(G)$ for the largest degree. A graph is \emph{regular} if all vertices have the same degree.
\begin{proposition}\label{prop: minimal degree}
If $G$ is a uniformly dense graph, then $\delta(G)\geq \rho(G)$.
\end{proposition}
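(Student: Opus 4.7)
The plan is to apply the density inequality $\rho(A)\leq\rho(G)$ from part~\emph{(2)} of Theorem~\ref{th: main characterization theorem} to the subset $A$ obtained by removing all edges incident to a vertex of minimum degree, and to combine the resulting lower bound on $\rank(A)$ with a simple structural upper bound. Concretely, I pick $v\in V$ with $\deg(v)=\delta(G)$. Since an isolated vertex has no effect on the cycle matroid or its density, I may restrict to the case $\delta(G)\geq 1$, so that $v$ is non-isolated in $G$. Set $S:=\{e\in E:v\in e\}$, with $|S|=\delta(G)$, and $A:=E\setminus S$. Apart from the trivial case $|E|=1$ (where $\delta(G)=\rho(G)=1$ directly), the set $A$ is nonempty: a minimum-degree vertex incident to every edge would force every other non-isolated vertex to have degree $\geq|E|=\deg(v)$, which in a simple graph is only possible when $|E|=1$.

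The upper bound on $\rank(A)$ is combinatorial: deleting $S$ turns $v$ into a newly isolated vertex of $(V,A)$, so $c(A)\geq c(G)+1$ and hence $\rank(A)=|V|-c(A)\leq \rank(G)-1$. The lower bound comes from uniform density: applying $\rho(A)\leq\rho(G)$ to the nonempty subset $A$ gives
$$\frac{|E|-\delta(G)}{\rank(A)}\;\leq\;\frac{|E|}{\rank(G)},$$
which rearranges to $\rank(A)\geq \rank(G)-\delta(G)/\rho(G)$. Combining the two bounds yields $\delta(G)/\rho(G)\geq 1$, i.e. $\delta(G)\geq\rho(G)$.

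The proof is essentially mechanical once $A$ has been chosen; the one point that deserves a sentence of justification is the strict drop $\rank(A)\leq\rank(G)-1$, which relies on $v$ being non-isolated in $G$ so that deleting $S$ genuinely creates a new component. This is precisely why the hypothesis $\delta(G)\geq 1$ is needed, and it can be imposed without loss of generality since isolated vertices affect neither $\rho(G)$ nor the uniform-density property.
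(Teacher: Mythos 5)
Your proof is essentially the same as the paper's: pick a minimum-degree vertex $v$, delete the incident edge set $S$, observe that $v$ becomes isolated so $\rank(E\setminus S)\leq \rank(E)-1$, and combine with the density inequality $\rho(E\setminus S)\leq\rho(E)$ to conclude $\delta(G)\geq\rho(G)$. You are slightly more careful than the paper about the degenerate cases ($|E|=1$ or an isolated minimum-degree vertex), which the paper's proof leaves implicit; note, though, that your phrase ``I may restrict to the case $\delta(G)\geq 1$'' is really a standing hypothesis on $G$ (no isolated vertices), not a reduction, since passing to the subgraph without isolated vertices changes $\delta(G)$ itself.
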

\begin{proof}
Let $v\in V$ be a minimal degree vertex, and let $A_v:=\lbrace e\in E : e\ni v\rbrace$ be the set of edges that contain $v$. Note that $\vert A_v\vert\geq\delta(G)$. Since removing $A_v$ creates at least one new component, i.e., the separated vertex $v$, we find that 
$$
\rank(E\setminus A_v) = \vert V\vert -c(E\setminus A_v)\leq \vert V\vert - c(E) - 1 \Rightarrow \rank(E)-\rank(E\setminus A_v)\geq 1.
$$
From uniform density of $G$, it then follows that
$$
\rho(E\setminus A_v) = \frac{\vert E\vert - \delta(G)}{\rank(E\setminus A_v)} \leq \frac{\vert E\vert}{\rank(E)} \iff \delta(G)\geq \rho(G)[\rank(E)-\rank(E\setminus A_v)]\Rightarrow \delta(G)\geq \rho(G)
$$
which completes the proof.
\end{proof}

A \emph{clique} in a graph $G$ is a set of vertices $U\subseteq V$ such that $\lbrace u,v\rbrace\in E$ for every pair of distinct vertices $u,v\in U$. The clique number $\operatorname{cl}(G)$ is the largest number of vertices in a clique in $G$.
\begin{proposition}
If $G$ is a uniformly dense graph, then $\operatorname{cl}(G)\leq 2\rho(G)$.
\end{proposition}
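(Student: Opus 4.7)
The plan is to exhibit a subset of edges whose density directly forces the clique bound via the defining inequality of uniform density. Concretely, let $U \subseteq V$ be a clique of maximum size $k := \operatorname{cl}(G)$, and let $A := \{\{u,v\} \in E : u,v \in U\}$ be the set of edges with both endpoints in $U$. This is the canonical subset to test, since $A$ corresponds to the densest possible configuration of edges on $k$ vertices.

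Next I would compute $|A|$ and $\rank(A)$ separately. Since $U$ is a clique, every unordered pair of vertices in $U$ is an edge, giving $|A| = \binom{k}{2} = k(k-1)/2$. For the rank, I would use the graph-theoretic description of the cycle matroid: a subset of edges is independent iff it is acyclic (a forest), so $\rank(A)$ equals the size of a largest forest contained in $A$. The graph $(U,A)$ is the complete graph $K_k$, which is connected on $k$ vertices, so the largest such forest is any spanning tree of $K_k$, of size $k-1$. Therefore $\rank(A) = k-1$, and
\[
\rho(A) \;=\; \frac{|A|}{\rank(A)} \;=\; \frac{k(k-1)/2}{k-1} \;=\; \frac{k}{2}.
\]

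Applying the hypothesis that $G$ is uniformly dense then yields $k/2 = \rho(A) \leq \rho(G)$, i.e.\ $\operatorname{cl}(G) = k \leq 2\rho(G)$, as desired. There is no real obstacle here: the result is a one-shot application of the definition of uniform density to the edge set of a maximum clique, and the only nontrivial ingredient is translating ``rank in the cycle matroid'' to ``size of a spanning tree'' for the induced complete subgraph. (Implicit is the mild case $k \leq 1$, where the clique contributes no edges and the inequality is vacuous; one only needs $k \geq 2$ for the computation of $\rho(A)$ to make sense.)
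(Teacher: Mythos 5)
Your proof is correct and follows essentially the same route as the paper's: test uniform density on the edge set $A$ of a maximum clique, compute $|A|=\binom{k}{2}$ and $\rank(A)=k-1$, and read off $k/2=\rho(A)\leq\rho(G)$. The only cosmetic difference is that the paper writes $\rank(A)=|V|-c(A)$ while you invoke the spanning-tree description of rank; both give $k-1$, and your remark about the trivial case $k\leq 1$ is a harmless extra.
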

\begin{proof}
Let $G$ be a uniformly dense graph and $A$ the edges in a maximal clique of size $k=\operatorname{cl}(G)$. Then
$$
\rho(E) \geq \rho(A)=\frac{\vert A\vert}{\vert V\vert - c(A)} = \frac{\frac{k(k-1)}{2}}{k-1}=\frac{k}{2}.
$$
\end{proof}

The \emph{girth} $\operatorname{gir}(G)$ of a graph $G$ is the size of the smallest cycle in $G$.
\begin{proposition}\label{proposition: girth}
If $G$ is a uniformly dense graph with a cycle, then $\operatorname{gir}(G)\geq \rho(G)/(\rho(G)-1)$.
\end{proposition}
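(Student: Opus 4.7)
The plan is to apply the uniform density inequality $\rho(A)\leq\rho(G)$ directly to the edge set of a smallest cycle. Concretely, let $C\subseteq E$ be the edge set of a cycle of minimum length $k=\operatorname{gir}(G)$. Because $C$ consists of exactly one cycle (and no smaller cycles can sit inside a cycle on $k$ vertices), we have $\beta(C)=1$, so by the formula $\rank(A)=|A|-\beta(A)$ recorded just before the subsection, $\rank(C)=k-1$.

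Hence $\rho(C)=|C|/\rank(C)=k/(k-1)$. The uniform density hypothesis gives $k/(k-1)\leq\rho(G)$. Since $G$ contains a cycle, $\beta(G)\geq 1$ and the identity $\rho(G)=|E|/(|E|-\beta(G))$ from \eqref{eq: density of a graph} shows that $\rho(G)>1$, so $\rho(G)-1>0$ and we may rearrange: $k\leq\rho(G)(k-1)$, i.e.\ $k(\rho(G)-1)\geq\rho(G)$, which yields $k\geq\rho(G)/(\rho(G)-1)$ as desired.

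There is essentially no obstacle here; the only thing to be careful about is checking that $\rho(G)>1$ so that the rearrangement of the inequality preserves direction, but this is immediate from the assumption that $G$ contains a cycle. All other steps are just direct instantiations of definitions that have already been set up in the paper.
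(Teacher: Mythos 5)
Your proof is correct and follows essentially the same route as the paper: apply the uniform density inequality to the edge set of a shortest cycle, compute its density as $k/(k-1)$, and rearrange. The only cosmetic difference is that you compute $\rank(C)=k-1$ via $\beta(C)=1$ while the paper uses $|V|-c(A)$; your explicit check that $\rho(G)>1$ (so the rearrangement preserves the inequality direction) is a small but welcome addition that the paper leaves implicit.
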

\begin{proof}
Let $G$ be a uniformly dense graph with a cycle, and let $A$ be the set of edges in a smallest cycle of size $k=\operatorname{gir}(G)$. Then,
$$
\rho(E)\geq\rho(A)=\frac{\vert A\vert}{\vert V\vert-c(A)}=\frac{k}{k-1}.
$$
This simplifies to $k\geq \rho(G)/(\rho(G)-1)$ and completes the proof.
\end{proof}

The definition of girth extends to matroids. A circuit is a subset $C\subseteq E$ such that $\rank(C)=\vert C\vert-1$ and $\rank(S)=\vert S\vert$ for every proper subset $S\subset C$; the girth $\operatorname{gir}(M)$ of a matroid is the size of the smallest circuit. The proof of Proposition \ref{proposition: girth} directly extends to general matroids:
\begin{proposition}
If $M$ is uniformly dense with a circuit, then $\operatorname{gir}(M)\geq \rho(G)/(\rho(G)-1)$.
\end{proposition}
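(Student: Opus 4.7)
The plan is to mirror the proof of Proposition \ref{proposition: girth} verbatim, replacing the graph-theoretic calculation of $\rho$ on a shortest cycle with the matroid-theoretic calculation of $\rho$ on a shortest circuit. The key observation is that the defining property of a circuit $C$, namely $\rank(C)=|C|-1$, gives the density of $C$ directly without any reference to graph structure.

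First I would let $C \subseteq E$ be a circuit of minimum size, so $|C|=\operatorname{gir}(M)=:k$. By the definition of circuit we have $\rank(C)=k-1$, hence
\[
\rho(C) \;=\; \frac{|C|}{\rank(C)} \;=\; \frac{k}{k-1}.
\]
Since $M$ is uniformly dense, condition $(2)$ of Theorem \ref{th: main characterization theorem} yields $\rho(C)\leq \rho(M)$, so $k/(k-1)\leq \rho(M)$. Rearranging gives $k\,\rho(M)-k\geq \rho(M)$, i.e.\ $k\geq \rho(M)/(\rho(M)-1)$, which is the desired bound.

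The only point requiring a brief justification is that the rearrangement is legitimate, i.e.\ that $\rho(M)>1$ so that $\rho(M)-1$ is positive. This is immediate from the hypothesis that $M$ contains a circuit: if $C$ is any circuit then $\rho(M)\geq \rho(C)=k/(k-1)>1$ since $k\geq 2$ (a circuit cannot be empty, and singletons are independent because $M$ is loopless by our standing assumption, so $k\geq 2$). There is no genuine obstacle here; the statement is essentially a one-line consequence of Theorem \ref{th: main characterization theorem} together with the definition of a circuit, which is exactly why the graph-case argument lifts unchanged to arbitrary matroids.
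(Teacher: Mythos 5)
Your proof is correct and is exactly what the paper intends when it says the graph-case proof ``directly extends to general matroids'': replace the shortest cycle by a shortest circuit $C$, use $\rank(C)=\lvert C\rvert-1$ to get $\rho(C)=k/(k-1)$, and apply the density inequality. The extra paragraph checking $\rho(M)>1$ is a sensible bit of diligence but not a new idea.
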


When graphs have a small number of cycles, there is a strong interplay between the lengths of these cycles and the density of the graph. As an application, we get the following characterization for connected uniformly dense graphs with two cycles. 
\begin{theorem}\label{th: 2-cycle uniformly dense graphs}
Let $G$ be a connected graph with $\beta(G)=2$. Then $G$ is uniformly dense if and only if $G$ can be constructed from three paths identified at the ends, with lengths $L_1,L_2$ and $L_3$ that satisfy $(L_3-L_2)\leq L_1\leq L_2\leq L_3$.
\end{theorem}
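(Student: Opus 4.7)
The plan is to combine a structural argument with a direct density computation. First I would pin down the topological shape of $G$ from uniform density, then identify the critical subsets of edges, and finally reduce the density inequalities to the claimed triangle-like condition on $L_1,L_2,L_3$.

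Since $\beta(G)=2>0$ the graph $G$ is not a tree, so by Theorem~\ref{th: no loops or coloops} it has no cut edges and every vertex has degree at least $2$. The identity $|E|=|V|+1$ (from $c(G)=1$ and $\beta(G)=2$) combined with the handshake lemma yields $\sum_{v\in V}(\deg(v)-2)=2$. Since every summand is a nonnegative integer, either one vertex has degree $4$ and all others have degree $2$ (the figure-eight case), or two vertices have degree $3$ and all others have degree $2$ (the theta case); a short case analysis rules out self-loops at the high-degree vertices in the theta case, since such a self-loop would force the edge between the two high-degree vertices to be a cut edge. In both situations $G$ decomposes into three internally disjoint paths joining the two high-degree vertices (which coincide in the figure-eight case); I label their lengths $L_1\leq L_2\leq L_3$, setting $L_1=0$ in the degenerate figure-eight case.

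Next I would identify the critical subsets $A\subseteq E$ for the inequality $\rho(A)\leq \rho(G)$. Any $A$ with $\beta(A)=0$ has $\rho(A)=1\leq \rho(G)$; for fixed $\beta(A)\geq 1$ the density $|A|/(|A|-\beta(A))$ is strictly decreasing in $|A|$; and any two of the three cycles $C_{ij}=P_i\cup P_j$ together use all three paths, so $\beta(A)=2$ forces $A=E$. Thus the only critical proper subsets are the three cycles, of lengths $L_i+L_j$. Setting $L=L_1+L_2+L_3$, the constraint for the smallest cycle reduces to
\begin{align*}
\frac{L_1+L_2}{L_1+L_2-1}\leq \frac{L}{L-2}\iff L_3\leq L_1+L_2\iff L_3-L_2\leq L_1,
\end{align*}
and the analogous constraints from $C_{13}$ and $C_{23}$ rearrange to $L_2\leq L_1+L_3$ and $L_1\leq L_2+L_3$, which are automatic from $L_1\leq L_2\leq L_3$. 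Reading the argument in reverse via Theorem~\ref{th: main characterization theorem} yields the converse direction. The main obstacle is the structural step, since a priori one cannot rule out tree-like appendages to the cycles; the no-coloops consequence of uniform density eliminates these cleanly, after which the classification reduces to a single inequality on the three path lengths.
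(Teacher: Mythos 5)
Your proof is correct and takes a genuinely different route from the paper's in both the structural and the density halves.

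For the structural step (uniform density $\Rightarrow$ theta/figure-eight shape), you combine the handshake lemma with the lower bound $\delta(G)\geq 2$ to get $\sum_v(\deg v-2)=2$ as a sum of nonnegative integers, forcing one degree-$4$ vertex or two degree-$3$ vertices; the excluded dumbbell configuration is then killed because it would produce a cut edge. The paper instead fixes a spanning tree, observes it has at most four leaves, and enumerates in a figure the finitely many ways to add two edges to such a tree without leaving degree-one vertices or cut edges. Your degree-sum argument is shorter and avoids the pictorial case enumeration, at the cost of a slightly implicit smoothing-to-a-multigraph step in the background. (Minor presentational point: your "every vertex has degree at least $2$" is really using connectedness together with no cut edges, or equivalently Proposition~\ref{prop: minimal degree} with $\rho(G)>1$, rather than Theorem~\ref{th: no loops or coloops} alone.)

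For the density step, you work entirely in the primal graph, classifying subsets by $\beta(A)$: $\beta=0$ is trivial, $\beta=2$ forces $A=E$ because the cycle space is already $2$-dimensional, and $\beta=1$ reduces to the three circuits $C_{ij}$ by monotonicity of $|A|/(|A|-1)$ in $|A|$. The paper instead invokes duality-invariance of uniform density (Theorem~\ref{th: uniform density and duality}) and planarity, passing to the $3$-vertex dual multigraph $G^\star$ of rank $2$, where the density inequalities split cleanly into rank-$2$ (trivial) and rank-$1$ (parallel classes) cases. The dual route is slicker because low rank makes the case analysis immediate; your primal route needs the cycle-space observation for $\beta(A)=2$, which you correctly supply. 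Both reduce to the same triangle inequality $L_3\leq L_1+L_2$, and, given the theta structure, reading the computation backwards via Theorem~\ref{th: main characterization theorem} gives the converse, as you note.
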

In the case where one of the paths has length zero, the graph in Theorem \ref{th: 2-cycle uniformly dense graphs} consists of two cycles of equal length identified in a single vertex (see Example \ref{example: bicyclic graphs example} further below). In the proof below, the forward direction was suggested by Harry Richman and simplifies an earlier version.

\begin{proof} Note that since $G$ is connected, $\beta(G)=2$ implies that $\vert E\vert=\vert V\vert+1$. 

(Proof of forward direction.) Let $G$ be a graph constructed from three paths identified at the ends. Let $P_1,P_2,P_3$ denote the edge sets of the paths, and let $L_1,L_2,L_3$ be their respective lengths. We show that the dual matroid $M(G)^\star$ is uniformly dense if and only if the inequalities of the theorem hold. Since $G$ is a planar graph, the dual matroid $M(G)^\star$ is again a graphic matroid, with corresponding graph given by the planar dual $G^\star$ of $G$ (see \cite[Thm. 2.3.4]{oxley_2011_matroid}); Figure \ref{fig: example of graph dual} shows an example. The graph $G^\star$ is a multigraph on $3$ vertices, say $\{ a,b,c\}$ with $L_1$ edges $E_{ab}$ between $a$ and $b$, $L_2$ edges $E_{bc}$ between $b$ and $c$ and $L_3$ edges $E_{ac}$ between $a$ and $c$; see the figure below for illustration. $M(G)^\star$ has rank two and its bases are given by pairs of edges that do not lie between the same vertices. 
\begin{figure}[h!]
    \centering    \includegraphics[width=0.6\textwidth]{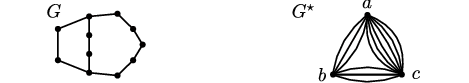}
    \caption{A planar graph $G$ consisting of paths of length $L_1=3,L_2=3,L_3=5$. The three vertices of the dual graph $G^\star$ correspond to the two bounded and one unbounded face of $G$.}
    \label{fig: example of graph dual}
\end{figure}

We now show that the path-length inequalities imply the density inequalities for all non-empty subsets $A\subset E=E_{ab}\sqcup E_{bc}\sqcup E_{ac}$. The subsets $A$ of rank $2$ are obtained by taking a basis and adding any further elements; in this case, $\rho(A)=\vert A\vert/2\leq \vert E\vert/2 = \rho(E)$. All remaining nonempty subsets $A$ have rank $1$ and are obtained by taking a non-empty subset of $E_{ab},E_{bc}$ or $E_{bc}$. Since the density satisfies
$\rho(A) = \vert A\vert$, the extreme cases for which the density inequalities need to be checked correspond to when $A$ is equal to one of the three sets of parallel edges, in these cases $\rho(E_{ab})=L_1$, $\rho(E_{bc})=L_2$ and $\rho(E_{ac})=L_3$. We check the density inequalities:
\begin{align*}
\rho(E_{ab})&\leq \rho(E) \iff 2L_1\leq L_1+L_2+L_3\iff  L_1\leq L_2+L_3\Longleftarrow L_1\leq L_2\\
\rho(E_{bc})&\leq \rho(E)\iff 2L_2\leq L_1+L_2+L_3\iff L_2\leq L_1+L_3\Longleftarrow L_2\leq L_3\\
\rho(E_{ac})&\leq \rho(E)\iff 2L_{3}\leq L_1+L_2+L_3\iff L_3\leq L_1+L_2\Longleftarrow L_1\geq (L_3-L_2).
\end{align*}
Thus, uniform density of $M(G)^\star$ follows from the assumptions on the path lengths. This completes the forward direction.
\\
(Proof of converse direction.) Let $G$ be a connected uniformly dense graph with $\vert E\vert = \vert V\vert+1$ edges. We will show that $G$ can be constructed from three paths identified at the ends. The conditions on the lengths then follow from the proof of the forward direction above.

By Kuratowski's Theorem \cite[Thm. 2.3.8]{oxley_2011_matroid} and $\beta(G)=2$, the graph $G$ is planar. Its planar dual $G^\star$ has $\rank(G^\star)=\vert E\vert-\rank(G)=\beta(G)=2$. Since the operation of collapsing parallel multi-edges to single edges does not change the rank of a graph, the multi-graph $G^\star$ with all parallel multi-edges collapsed is a simple graph of rank $2$; this is either a $2$-path or a $3$-cycle. This means that $G$ is the planar dual of a $2$-path or a $3$-cycle graph, with some edges duplicated into parallel multi-edges, and thus proves that $G$ has the required structure of three paths identified at their ends (each path dual to one of the multi-edges). This completes the proof.
\end{proof}

\begin{example}\label{example: bicyclic graphs example}
Let $G,H,K$ be the graphs in Figure \ref{fig: example of bicyclic graphs}. These graphs are constructed by identifying the ends of three paths of lengths $(0,4,4)$ for $G$, lengths $(3,3,6)$ for $H$ and lengths $(2,3,6)$ for $K$. Applying Theorem \ref{th: 2-cycle uniformly dense graphs}, we find that $G$ is uniformly dense (since $4-4\leq 0\leq 4\leq 4$) and $H$ is uniformly dense (since $6-3\leq 3\leq 3\leq 6$) but $K$ is not uniformly dense (since $6-3\not\leq 2\leq 3\leq 6$).
\begin{figure}[h!]
    \centering
    \includegraphics[width=0.6\textwidth]{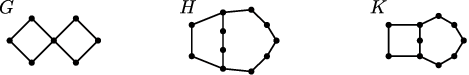}
    \caption{The three graphs $G,H$ and $K$ from Example \ref{example: bicyclic graphs example}. $G$ and $H$ are uniformly dense and $K$ is not uniformly dense.}
    \label{fig: example of bicyclic graphs}
\end{figure}
\end{example}

\subsection{Connectivity}\label{subsection: connectivity for ud graphs}
Uniform density has strong implications for the connectivity of a graph. Recall that $c(A)$ is the number of connected components of the graph $G[A]$ and thus that $c(E\backslash A)-c(E)$ is the number of new components created when removing a set of edges $A$ from the graph. The uniform density condition of a graph can be written in terms of the number of connected components as follows:
\begin{proposition}\label{prop: uniform density and connected components}
A graph $G$ is uniformly dense if and only if
\begin{equation}\label{eq: uniform density for graphs and connected components}
c(E\backslash A) -c(E)\leq \frac{\vert A\vert}{\rho(G)}\text{~for all $A\subset E$.}
\end{equation}
\end{proposition}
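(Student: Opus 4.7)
The plan is to translate the matroid-theoretic definition of uniform density (expressed via rank) into a graph-theoretic statement about numbers of components, using the identity $\rank(S)=|V|-c(S)$ that was recorded in the definition of $c(A)$ earlier in the section. This is essentially just a change of variables from ``subset $B$ of edges'' to ``edges removed $A=E\setminus B$'', so I expect the proof to be a short direct calculation with no real obstacle; the only care needed is at the boundary case where $A$ is empty or equal to $E$.

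Concretely, I would begin by rewriting the defining inequality $\rho(B)\leq \rho(E)$ for every nonempty $B\subseteq E$ as
\[
\rank(B)\geq \frac{|B|}{\rho(G)},
\]
which is simply clearing denominators in $|B|/\rank(B)\leq |E|/\rank(E)$ and using $\rho(G)=|E|/\rank(E)$. Then I would set $B=E\setminus A$ for a subset $A\subsetneq E$ and apply $\rank(S)=|V|-c(S)$ on both sides together with $\rank(E)=|V|-c(E)$, obtaining
\[
|V|-c(E\setminus A)\geq \frac{|E|-|A|}{\rho(G)}=\frac{|E|}{\rho(G)}-\frac{|A|}{\rho(G)}=\bigl(|V|-c(E)\bigr)-\frac{|A|}{\rho(G)}.
\]
Rearranging yields the claimed inequality $c(E\setminus A)-c(E)\leq |A|/\rho(G)$, and every step is reversible, so the two conditions are equivalent for all proper subsets $A\subsetneq E$.

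Finally I would deal with the excluded boundary cases. If $A=\emptyset$, then $c(E\setminus A)-c(E)=0$ and $|A|/\rho(G)=0$, so \eqref{eq: uniform density for graphs and connected components} holds trivially and corresponds to the vacuous case $B=E$ in the definition. The case $A=E$, which gives $B=\emptyset$, is excluded from the definition of uniform density but also does not appear in the quantifier $A\subset E$ of the proposition (the density of the empty set being undefined), so no extra care is required. The bulk of the argument is thus the equivalence displayed above, and I do not anticipate any genuinely hard step.
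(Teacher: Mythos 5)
Your proof is correct and follows essentially the same route as the paper: rewrite the density inequality in terms of rank, substitute $\rank(S)=|V|-c(S)$ with $S=E\setminus A$, and rearrange. The only cosmetic difference is that you clear denominators to $\rank(B)\geq |B|/\rho(G)$ before substituting, whereas the paper manipulates the fraction inequality directly; the content is identical.
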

\begin{proof}
Let $G$ be a graph and $A\subset E$ a proper subset of edges. Then,
\begin{align*}
\rho(E\backslash A)\leq \rho(E) 
&\iff \frac{\vert E\backslash A\vert}{\rank(E\backslash A)} \leq \frac{\vert E\vert}{\rank(E)} 
\\
& \iff \frac{\vert E\vert - \vert A\vert}{\vert V\vert - c(E\backslash A)}\leq \frac{\vert E\vert}{\vert V\vert - c(E)}
\\
& \iff c(E\backslash A)-c(E) \leq \vert A\vert \frac{\vert V\vert-c(E)}{\vert E\vert}
\\
&\iff c(E\backslash A)-c(E) \leq \frac{\vert A\vert}{\rho(G)}.
\end{align*}
Since $E\backslash A$ can be any nonempty set of edges, this completes the proof.
\end{proof}

In other words, removing $k$ edges from a uniformly dense graph increases the number of components by at most $k/\rho(G)$. This notion of connectivity is also called ``strength'' and its reciprocal ``vulnerability'' and was studied by Gusfield in \cite{gusfield_1983_connectivity} for graphs, by Cunningham in \cite{cunningham_1985_optimal} for matroids. Strength was related to uniform density by Catlin et al.\ in \cite{catlin_1992_arboricity}.

A closely related but vertex-based notion of connectivity is toughness. A graph is \emph{$t$-tough} if removing any $k$ vertices increases the number of components by at most $k/t$. The complete graph is defined to be $t$-tough for every $t$. See \cite{bauer_2006_toughness} for a survey of toughness and its relation to other graph properties. Any graph is $(1/\Delta(G))$-tough, but for uniformly dense graphs this can be improved by a factor of $\rho(G)$, and even more for connected regular graphs.
\toughness
\begin{proof}
Let $G$ be a uniformly dense graph. Let $U\subseteq V$ be a subset of vertices whose removal increases the number of components by $n_U>0$, and let $A_U:=\lbrace \lbrace u,v\rbrace\in E: u\in U,v\not\in U\rbrace$ be the cut-set of $U$. Removing the vertices $U$ from $G$ is the same as first removing the edges $A_U$ and then the vertices $U$ from $G$. As a result, removing $A_U$ results in at least as many new components as removing $U$ and thus $n_U\leq c(E\backslash A_U)-c(E)$. By uniform density of $G$ and invoking Proposition \ref{prop: uniform density and connected components}, we then find
$$
n_U\leq c(E\backslash A_U) - c(E) \leq \frac{\vert A_U\vert}{\rho(G)} \leq \frac{\Delta(G)}{\rho(G)}\vert U\vert.
$$
This proves that uniformly dense graphs are $(\rho(G)/\Delta(G))$-tough.

Let now $G$ be a connected, $d$-regular, uniformly dense graph. Let $U\subseteq V$ be a subset of vertices whose removal from $G$ increases the number of components, and let $T$ be a spanning tree of $G$ (since $G$ is connected). We write $G'$ for the graph $G$ with vertices $U$ removed and $F':=\lbrace e\in T : e\subseteq V\backslash U\rbrace$. We note that $F'$ is not necessarily a spanning tree or forest of $G'$, and in general we have $c(F')\geq c(G')$. See Figure \ref{fig: graph trees and subgraphs} for an example of the construction.

\begin{figure}[h!]
    \centering
    \includegraphics[width=0.5\textwidth]{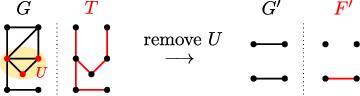}
    \caption{A graph $G$ and spanning tree $T\subset E(G)$. The graph $G'$ and $F'\subset E(G')$ are obtained by removing the vertices in $U$ from $G$. Note that in this example $F'$ is not a spanning forest of $G'$, and $c(F')> c(G')$.}
    \label{fig: graph trees and subgraphs}
\end{figure}

Let $\deg_T(v):=\vert\lbrace e\in T : e\ni v\rbrace\vert$ denote the \emph{degree} of a vertex $v$ in $T$, and let $$\epsilon_U(T):=\vert \lbrace e\in T : e\subseteq U \rbrace\vert$$ be the number of edges in $T$ between pairs of vertices in $U$.
For the edge set $F'$, we know that $\vert F'\vert = \vert V\setminus U\vert - c(F')$ since the set contains no cycles, and therefore we can write
\begin{align}
0 &= \vert F'\vert - \vert V\setminus U\vert + c(F')\nonumber
\\
&= \left[\vert T\vert - \left(\sum_{v\in U}\deg_T(v) - \epsilon_U(T)\right)\right] - \Big[\vert V\vert - \vert U\vert\Big] + c(F')\nonumber
\\
&= (\vert T\vert - \vert V\vert +1)-1 - \sum_{v\in U}\deg_T(v) + \epsilon_U(T) + \vert U\vert + c(F').\nonumber
\end{align}
Since $G$ is connected, we have $\vert T\vert = \rank(G)=\vert V\vert-1$ and the equality above further simplifies to
\begin{equation}\label{eq: sum over degrees in tree for toughness}
\sum_{v\in U} \deg_T(v) = \epsilon_U(T) + \vert U\vert + c(F') - 1. 
\end{equation}
By uniform density, there exists a normalized $E$-uniform measure $\mu$ on the spanning trees $\mathcal{B}$ of $G$. By Lemma \ref{lemma: foster euler marginal sum} this measure satisfies $\mu(\lbrace T\in\mathcal{B} : T\ni e\rbrace)=\rho(G)^{-1}$ for every edge $e$. For any vertex $v\in V$ and summing over spanning trees, we find 
\begin{align*}
\sum_{T\in\mathcal{B}}\mu(T)(2-\deg_T(v)) &= \sum_{T\in\mathcal{B}}\mu(T)\left(2-\sum_{e\in T:e\ni v}1\right) 
\\
&= 2-\sum_{T\in\mathcal{B}}\sum_{e\in T:e\ni v}\mu(T)\qquad\qquad(\mu\text{~is normalized})
\\
&= 2-\sum_{e\in E:e\ni v}\sum_{T\in\mathcal{B}:T\ni e}\mu(T)\qquad(\text{switch summation order})
\\
&= 2-\frac{d}{\rho(G)}\qquad\qquad\qquad\qquad\text{~($G$ is $d$-regular)}
\\
&= 2 - \frac{d(\vert V\vert-1)}{\vert E\vert} = 2 - \frac{2(\vert V\vert-1)}{\vert V\vert}>0.
\end{align*}
Summing this inequality over the vertices in $U$, we obtain
\begin{align*}
0&<\sum_{v\in U}\sum_{T\in\mathcal{B}}\mu(T)(2-\deg_T(v)) 
\\
&= \sum_{T\in\mathcal{B}}\mu(T)\left(2\vert U\vert - \sum_{v\in U}\deg_T(v)\right)
\\
&= \sum_{T\in\mathcal{B}}\mu(T)\left(2\vert U\vert - \vert U\vert -\epsilon_U(T)-c(F')+1\right) \quad\text{~(Eq.\ \eqref{eq: sum over degrees in tree for toughness})}
\\
&\leq \sum_{T\in\mathcal{B}}\mu(T)\left(\vert U\vert - c(G') + 1 \right)\quad\text{~($\epsilon_U(T)\geq 0$ and $c(F')\geq c(G'))$}
\\
&= \vert U\vert - c(G') + 1.
\end{align*} 
Since $c(G')$ and $\vert U\vert$ are integers, the strict inequality implies $c(G')\leq \vert U\vert$. We recall that $G'$ is the graph $G$ with vertices $U$ removed. This completes the proof that $G$ is $1$-tough.
\end{proof}

Our proof of the $1$-tough property in Theorem \ref{th: toughness of uniformly dense graphs} is heavily based on the proof of Theorem 3.4.18 in Fiedler's book \cite{fiedler_2011_matrices}, which works with a geometric condition on graphs to guarantee the existence of a measure $\mu$ such that $\sum_{T}\mu(T)(2-\deg_T(v))$ is positive for each vertex $v$. This condition is satisfied for connected, regular, uniformly dense graphs.

To conclude, we highlight one implication of the connectivity Theorem \ref{th: toughness of uniformly dense graphs} for uniformly dense graphs. A \emph{perfect matching} in a graph is a subset $A\subseteq E$ of edges such that $A$ covers every vertex exactly once.
\begin{corollary}
Let $G$ be a connected, regular, uniformly dense graph. Then, either $G$ has a perfect matching, or $G\backslash v$ has a perfect matching, for every vertex $v\in V(G)$.
\end{corollary}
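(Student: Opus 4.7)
The plan is to combine the $1$-toughness guaranteed by Theorem \ref{th: toughness of uniformly dense graphs} with Tutte's classical theorem on perfect matchings, closing the gap between the two conditions via a parity argument. Recall Tutte's theorem: a graph $H$ has a perfect matching if and only if $o(H-T)\leq \vert T\vert$ for every $T\subseteq V(H)$, where $o(\cdot)$ denotes the number of components of odd order. I will also use the basic parity identity $o(H)\equiv \vert V(H)\vert\pmod{2}$, which holds because the vertex counts of the odd components are the only ones contributing an odd summand to $\vert V(H)\vert$.

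By Theorem \ref{th: toughness of uniformly dense graphs}, the graph $G$ is $1$-tough, i.e.\ $c(G-S)\leq \vert S\vert$ for every nonempty $S\subseteq V$, where $c$ is the number of connected components. In particular $o(G-S)\leq c(G-S)\leq \vert S\vert$ for all nonempty $S$. I would now split on the parity of $\vert V\vert$.

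If $\vert V\vert$ is even, I claim $G$ itself has a perfect matching. For $S=\emptyset$, connectedness together with $\vert V\vert$ even gives $o(G)=0$. For nonempty $S$, the toughness bound above already gives $o(G-S)\leq \vert S\vert$. Tutte's theorem applies and produces a perfect matching.

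If $\vert V\vert$ is odd, I claim $G\setminus v$ has a perfect matching for every $v\in V$. Fix $v$ and write $H=G\setminus v$; note $\vert V(H)\vert=\vert V\vert-1$ is even. For any $T\subseteq V\setminus\{v\}$ set $S=T\cup\{v\}$, so that $H-T=G-S$ and $\vert S\vert=\vert T\vert+1\geq 1$. By $1$-toughness of $G$, $o(H-T)=o(G-S)\leq c(G-S)\leq \vert T\vert+1$. The parity identity gives $o(H-T)\equiv \vert V\vert-\vert T\vert-1\equiv \vert T\vert\pmod{2}$, since $\vert V\vert$ is odd. Hence $o(H-T)$ and $\vert T\vert+1$ have opposite parities, so the inequality $o(H-T)\leq \vert T\vert+1$ tightens to $o(H-T)\leq \vert T\vert$. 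Tutte's theorem then yields a perfect matching in $H=G\setminus v$.

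The main obstacle, although routine, is correctly closing the one-unit gap between the toughness inequality $c(G-S)\leq \vert S\vert$ and the Tutte inequality $o(H-T)\leq \vert T\vert$ when $\vert V\vert$ is odd; the parity identity does exactly this, and no further combinatorial input is required.
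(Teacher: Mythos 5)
Your proof is correct. The paper's own proof of this corollary is shorter: it simply cites Theorems 91 and 92 of the Bauer--Broersma--Schmeichel toughness survey \cite{bauer_2006_toughness}, which say precisely that a $1$-tough graph on an even number of vertices has a perfect matching, and that a $1$-tough graph on an odd number of vertices has a perfect matching after deleting any vertex. What you have done is re-derive those two classical facts from scratch: you invoke Tutte's theorem $o(H-T)\leq\vert T\vert$ together with the parity congruence $o(H)\equiv\vert V(H)\vert\pmod 2$, and you close the one-unit gap between the $1$-toughness bound $c(G-S)\leq\vert S\vert$ and the Tutte condition by observing that $o(H-T)$ and $\vert T\vert+1$ have opposite parities in the odd case. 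Both routes are sound and have the same logical content; yours is more self-contained and exposes why toughness yields matchings, while the paper's is shorter by deferring to a standard reference. One small point worth keeping in mind: the $1$-toughness conclusion of Theorem \ref{th: toughness of uniformly dense graphs} is stated and proved in the form $c(G-U)\leq\vert U\vert$ whenever removing $U$ increases the component count; for nonempty $U$ whose removal does not disconnect the connected graph $G$ the inequality $c(G-U)\leq 1\leq\vert U\vert$ holds trivially, so the version you use ($c(G-S)\leq\vert S\vert$ for all nonempty $S$) is indeed available.
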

\begin{proof}
An even $1$-tough graph has a perfect matching (see Theorem 91 in \cite{bauer_2006_toughness}) and an odd $1$-tough graph has a perfect matching in every subgraph with one vertex removed (see Theorem 92 in \cite{bauer_2006_toughness}). The claim thus follows immediately from Theorem \ref{th: toughness of uniformly dense graphs}.
\end{proof}

\subsection{Spectrum}\label{subsection: spectral results for ud graphs}
In this section, we study spectral properties of uniformly dense graphs; that is, we define linear operators associated with graphs and study how uniform density is reflected in the eigenvalues of these operators. For more background on spectral graph theory, we refer to \cite{MHJ, chung, brouwer_2011_spectra, mohar_laplacian_1991, godsil_algebraic_2001}.\newline

We again work with a simple graph $G=(V,E)$ and note that removing or adding isolated vertices to a graph does not change the density or uniform density. We recall that $G[A]$ denotes the induced subgraph on $A\subseteq E$, in which the edges not in $A$ and any isolated vertices have been removed; we write $V_A:=V(G[A])$ for the vertex set of this induced subgraph. \newline

Let $C(V)$ denote the vector space of functions $f:V\rightarrow\mathbb{R}$, and let $C(E)$ be the vector space of functions $\gamma:E\rightarrow\mathbb{R}$. The \emph{normalized Laplacian} $L=L(G)$ associated with $G$ is the linear map $L:C(V)\rightarrow C(V)$ defined by
$$
Lf(v) : = f(v) - \frac{1}{\deg(v)}\sum_{u\in N_G(v)}f(u),
$$
for $f\in C(V)$ and $v\in V$, where $N_G(v)=\{u\in V\,:\{u,v\}\in E\}$ is the neighbour set of $v$. In matrix notation, this operator is usually written as $L=I-AD^{-1}$ where $I$ is the identity matrix, $A$ the adjacency matrix and $D=\operatorname{diag}(\deg(v_1),\dots,\deg(v_n))$ the diagonal degree matrix.
\newline

Now, for each edge, we fix an arbitrary \emph{orientation}, that is, we let one of its endpoints be its \emph{head}, and we let the other endpoint be its \emph{tail}. Fixing an orientation is needed in order to give the following definition, but changing the orientation of any edge does not affect the following operator.\newline

The \emph{edge Laplacian} $L^1=L^1(G)$ associated with $G$ is the linear map $L^1:C(E)\rightarrow C(E)$ defined by
\begin{equation*}
    L^1\gamma(e):=\frac{\sum_{e_1:v\rm{~tail}} \gamma(e_1)-\sum_{e_2:v\rm{~head}}\gamma(e_2)}{\deg(v)}-\frac{\sum_{e'_1:w\rm{~tail}}\gamma(e'_1)-\sum_{e'_2:w\rm{~head}}\gamma(e'_2)}{\deg(w)},
\end{equation*}
 for $\gamma\in C(E)$ and $e\in E$ that has tail $v$ and head $w$.\newline

\begin{equation*}
    \RQ(f):=\frac{\sum_{\lbrace v,w\rbrace\in E}\biggl(f(v)-f(w)\biggr)^2}{\sum_{v\in V}\deg(v)\cdot f(v)^2},
\end{equation*}for $f\in C(V)$.\newline

As shown, for instance, in \cite{MHJ},\cite{chung}, $L$ has $\vert V\vert$ real, nonnegative eigenvalues whose algebraic and geometric multiplicity coincide, and similarly $L^1$ has $\vert E\vert$ real, nonnegative eigenvalues. The nonzero eigenvalues of the two operators coincide, and they are known to encode several geometric properties of the graph associated with them. Moreover, according to the Courant--Fischer--Weyl min-max Principle, the eigenvalues of $L$ are given by minimax values of the Rayleigh quotients \cite{chung}
\begin{equation*}
    \RQ(f):=\frac{\sum_{\lbrace v,w\rbrace\in E}\biggl(f(v)-f(w)\biggr)^2}{\sum_{v\in V}\deg(v)\cdot f(v)^2},
\end{equation*}for $f\in C(V)$.\newline

A first spectral result on uniformly dense graphs follows from the fact that the multiplicity of zero as an eigenvalue of $L(G)$ is equal to the number of connected components of $G$ and the multiplicity of zero as an eigenvalue of $L^1(G)$ is equal to the number of independent cycles in $G$. Let $n_0(\cdot)$ denote the \emph{multiplicity of zero as an eigenvalue} of a matrix, then we have \cite{MHJ}
\begin{equation}\label{eq: nullity connected components and cycles}
n_0(L(G))=c(G) \quad\text{~and~}\quad n_0(L^1(G))=\beta(G).    
\end{equation}
The uniform density condition of a graph can be written in terms of the Laplacian spectra as follows:
\begin{theorem}\label{th: spectral characterization}
The following are equivalent for a graph $G$:
\begin{enumerate}
    \item[(1)] $G$ is uniformly dense.

    \item[(2)] $\rank(L(G))-\rank(L(G[A])) \leq \frac{\vert E\vert-\vert A\vert}{\rho(G)}$
    for all $\emptyset \neq A\subseteq E$.

    \item[(3)] $\frac{n_0(L^1(G[A]))}{\vert A\vert} \leq \frac{n_0(L^1(G))}{\vert E\vert} \text{~for all $\emptyset \neq A\subseteq E$.}$
\end{enumerate}
\end{theorem}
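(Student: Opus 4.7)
The proof turns on two spectral--combinatorial identities already recorded in the excerpt: $n_0(L(H)) = c(H)$ and $n_0(L^1(H)) = \beta(H)$ for any graph $H$. Since $L(H)$ is a square matrix of size $\vert V(H)\vert$, rank--nullity gives $\rank(L(H)) = \vert V(H)\vert - c(H)$. I would first translate the spectral quantities appearing in (2) and (3) into matroid quantities. Passing from $(V,A)$ to $G\vert A$ removes isolated vertices; since each isolated vertex forms its own connected component, the vertex count and the component count drop by the same amount, so
\begin{equation*}
\rank(L(G\vert A)) \;=\; \vert V(G\vert A)\vert - c(G\vert A) \;=\; \vert V\vert - c(A) \;=\; \rank(A),
\end{equation*}
where the last rank is the matroid rank of $A$ in $M(G)$. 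Likewise $(V,A)$ and $G\vert A$ share the same edge set and therefore the same cycle structure, so $n_0(L^1(G\vert A)) = \beta(A) = \vert A\vert - \rank(A)$.

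For $(1) \Leftrightarrow (3)$, inserting these identities turns (3) into $(\vert A\vert - \rank(A))/\vert A\vert \leq (\vert E\vert - \rank(E))/\vert E\vert$, which rearranges to $\rho(A) \leq \rho(G)$ for every nonempty $A \subseteq E$ --- the very definition of uniform density.

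For $(1) \Leftrightarrow (2)$, condition (2) becomes $\rank(E) - \rank(A) \leq (\vert E\vert - \vert A\vert)/\rho(G)$, equivalently $c(A) - c(E) \leq (\vert E\vert - \vert A\vert)/\rho(G)$ after substituting $\rank(S) = \vert V\vert - c(S)$. Setting $B := E \setminus A$ (which ranges over proper subsets of $E$ precisely as $A$ ranges over nonempty subsets) yields $c(E \setminus B) - c(E) \leq \vert B\vert / \rho(G)$ for all $B \subsetneq E$. By Proposition \ref{prop: uniform density and connected components}, this is exactly the characterization of uniform density, with the case $B = \emptyset$ being trivial.

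There is no substantial obstacle: the theorem is essentially a dictionary-translation of Proposition \ref{prop: uniform density and connected components} and the defining density inequality via the spectral identities $n_0(L) = c$ and $n_0(L^1) = \beta$. The one step requiring some care is the bookkeeping around isolated vertices when relating $\rank(L(G\vert A))$ to the matroid rank of $A$, since $G\vert A$ differs from $(V,A)$ precisely by the deletion of those isolated vertices.
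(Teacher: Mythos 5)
Your proof is correct and takes essentially the same route as the paper: both translate the spectral quantities into matroid/graph quantities via $n_0(L)=c$ and $n_0(L^1)=\beta$, reduce $(3)$ directly to the density inequality $\rho(A)\leq\rho(E)$, and reduce $(2)$ to Proposition~\ref{prop: uniform density and connected components} via the change of variables $B=E\setminus A$. Your write-up is a bit more explicit about two points the paper's proof passes over quietly --- the bookkeeping for isolated vertices when identifying $\rank(L(G\vert A))$ with the matroid rank of $A$, and the complementation needed to match the form of Proposition~\ref{prop: uniform density and connected components} --- but these are refinements of the same argument, not a different one.
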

\begin{proof}
$(1)\Leftrightarrow (2).$ Note that $\rank(L(G))=\vert V\vert-n_0(L(G))$. Following \eqref{eq: nullity connected components and cycles}, the difference of the Laplacian ranks can be written as
\begin{align*}
\rank(L(G))-\rank(L(G[A])) &= (\vert V\vert - c(G)) - (\vert V_A - c(G[A])) 
\\
&= \rank(E) - \rank(A)
\\
&= c(A) - c(E).
\end{align*}
The rank in the first line is the linear-algebraic rank of the Laplacian matrices, while the rank in the second line is the rank in the cycle matroid $M(G)$. By Proposition \ref{prop: uniform density and connected components} we know that $c(A)-c(E)\leq (\vert E\vert - \vert A \vert)/\rho(G)$ if and only if $G$ is uniformly dense; this proves the equivalence.

$(1)\Leftrightarrow (3).$  We recall that $\beta(G)=\vert E\vert-\rank(G)$. Invoking expression \eqref{eq: nullity connected components and cycles}, we then find
\begin{align*}
\frac{n_0(L^1(G[A]))}{\vert A\vert} \leq \frac{n_0(L^1(G))}{\vert E\vert} &\iff \frac{\vert A\vert - \rank(A)}{\vert A\vert} \leq \frac{\vert E\vert-\rank(E)}{\vert E\vert}\\
& \iff \frac{\rank(A)}{\vert A\vert} \geq \frac{\rank(E)}{\vert E\vert}
\\
&\iff \rho(A)\leq \rho(E),
\end{align*}
for all $\emptyset\neq A\subseteq E(G)$. By Theorem \ref{th: main characterization theorem} the latter inequality is equivalent to uniform density.
\end{proof}

Now, we let $\lambda_{\max}(G)$ denote the largest eigenvalue of $L(G)$, and similarly, given $A\subseteq E$, we let $\lambda_{\max}(G[A])$ denote the largest eigenvalue of $L(G[A])$. We also define the \emph{boundary} of $A$ as 
\begin{equation*}
    \delta(A):=\{e\in E\setminus A\,:\, \text{at least one endpoint of }e\text{ is in }V_A\,\}.
\end{equation*} Given a set $S$ of vertices, we let
\begin{equation*}
    \vol(S):=\sum_{v\in S}\deg v
\end{equation*}be the \emph{volume} of $S$. Moreover, given a vertex $v$, we let $\deg_A(v)$ denote the degree of $v$ in the graph $G[A]$, and given $S\subseteq V_A$, we let
\begin{equation*}
    \vol_A(S):=\sum_{v\in S}\deg_A(v).
\end{equation*}

\begin{theorem}\label{thm:lambda-max}
Let $G$ be a graph. Then
\begin{equation}\label{eq:lambdamax}
\lambda_{\max}(G)\geq 2\cdot \rho(G)^{-1}
\end{equation}
and
\begin{equation}\label{eq:A1}
\lambda_{\max}(G)\geq 
2\cdot \rho(A)^{-1} \frac{\vert A\vert}{\vert A\vert + \vert \delta(A)\vert} \quad\text{~for all $\emptyset\neq A\subseteq E$.}   
\end{equation}
Moreover, if $G$ is uniformly dense, then
\begin{equation}\label{eq:lambdamax-stable}
\lambda_{\max}(G[A])\geq 2\cdot \rho(G)^{-1} \quad \text{for all }\emptyset\neq A\subseteq E.
\end{equation}
\end{theorem}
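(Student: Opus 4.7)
The plan is to establish~\eqref{eq:A1} first by exhibiting a single test function whose Rayleigh quotient meets the stated bound; then~\eqref{eq:lambdamax} drops out as the special case $A=E$, and~\eqref{eq:lambdamax-stable} follows by applying~\eqref{eq:lambdamax} to the subgraph $G\vert A$ and invoking the uniform-density inequality $\rho(A)\le\rho(G)$.

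For~\eqref{eq:A1}, I would use the Courant--Fischer--Weyl principle $\lambda_{\max}(G)=\max_f \RQ(f)$. Let $T\subseteq A$ be a spanning forest of the subgraph $(V\vert_A,A)$, so that $|T|=\rank(A)$. Because $T$ is acyclic and therefore bipartite, there exists a bipartition $V\vert_A=S\sqcup S'$ such that every edge of $T$ has one endpoint in $S$ and the other in $S'$. Define the test function
$$
f(v)=\begin{cases}+1 & v\in S,\\ -1 & v\in S',\\ \phantom{-}0 & v\in V\setminus V\vert_A.\end{cases}
$$

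The Rayleigh numerator $\sum_{e\in E}(f(v_e)-f(w_e))^2$ receives a contribution of $(1-(-1))^2=4$ from each of the $\rank(A)$ edges of $T$ and a nonnegative contribution from every other edge, so it is at least $4\rank(A)$. The denominator equals $\sum_{v\in V\vert_A}\deg_G(v)=\vol_G(V\vert_A)$. Since every edge incident to $V\vert_A$ belongs to $A\cup\delta(A)$ and contributes at most $2$ to this volume, the crude bound $\vol_G(V\vert_A)\le 2(|A|+|\delta(A)|)$ holds. Combining,
$$
\lambda_{\max}(G)\;\geq\;\RQ(f)\;\geq\;\frac{4\rank(A)}{\vol_G(V\vert_A)}\;\geq\;\frac{4\rank(A)}{2(|A|+|\delta(A)|)}\;=\;2\rho(A)^{-1}\cdot\frac{|A|}{|A|+|\delta(A)|},
$$
which is~\eqref{eq:A1}.

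Specializing to $A=E$ gives $\delta(E)=\emptyset$, $\vol_G(V)=2|E|$, and $\rank(E)=\rank(G)$, so the bound reduces to $\lambda_{\max}(G)\ge 2\rho(G)^{-1}$, proving~\eqref{eq:lambdamax}. Finally, applying~\eqref{eq:lambdamax} to the graph $G\vert A$, whose edge set is $A$ and whose density equals $\rho(A)$, yields $\lambda_{\max}(G\vert A)\ge 2\rho(A)^{-1}$; the uniform-density hypothesis gives $\rho(A)\le\rho(G)$, hence $\rho(A)^{-1}\ge\rho(G)^{-1}$, and~\eqref{eq:lambdamax-stable} follows.

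The main conceptual step is recognizing that a spanning forest of $A$, being acyclic and hence bipartite, produces a $\pm 1$ labeling under which exactly $4\rank(A)$ units of ``disagreement energy'' are concentrated on the tree edges. After that observation, the rest is a routine Rayleigh-quotient accounting combined with the crude volume bound $\vol_G(V\vert_A)\le 2(|A|+|\delta(A)|)$; no spectral input beyond the variational characterization of $\lambda_{\max}$ is needed.
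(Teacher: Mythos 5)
Your proof is correct and uses the same underlying mechanism as the paper's: a spanning forest of $(V\vert_A, A)$ is bipartite, the resulting $\pm 1$ (and $0$) test function concentrates $4\rank(A)$ units of Rayleigh numerator on the forest edges, and the denominator is bounded by $2(|A|+|\delta(A)|)$. The only difference is organizational — you prove \eqref{eq:A1} once and then obtain \eqref{eq:lambdamax} as the special case $A=E$ and \eqref{eq:lambdamax-stable} by applying \eqref{eq:lambdamax} to $G\vert A$ and invoking $\rho(A)\le\rho(G)$, whereas the paper proves \eqref{eq:lambdamax} directly, remarks that the identical argument on $G\vert A$ gives \eqref{eq:lambdamax-stable}, and then runs the bipartition argument a second time with the extra volume bound for \eqref{eq:A1}. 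Your consolidation is a touch cleaner but mathematically equivalent.
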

\begin{proof}
Let $G$ be a graph with $n$ vertices and $m$ edges and let $T$ be a spanning forest of $G$. Then $T$ has $c(G)$ components and $n - c(G)$ edges. Furthermore, $T$ is bipartite and there exists a partition $V=V_1\sqcup V_2$ of the vertex set such that all edges of $T$ have one endpoint in $V_1$ and one endpoint in $V_2$. Hence, the number of edges between $V_1$ and $V_2$ in $G$, denoted $|E(V_1,V_2)|$, satisfies
\begin{equation*}
|E(V_1,V_2)|\geq n-c(G).
\end{equation*}
Now, let $f:V\rightarrow \mathbb{R}$ be defined by
\begin{equation*}
f(v):=\begin{cases}1, &\text{if }v\in V_1\\ -1,&\text{if }v\in V_2.\end{cases}
\end{equation*}
Then, by the Courant--Fischer--Weyl min-max Principle \cite[Thm. 1.2.1]{MHJ}, \cite[Thm. 2.4.1]{brouwer_2011_spectra},
\begin{equation*}
\lambda_{\max}(G)\geq \textrm{RQ}(f)=\frac{4\cdot |E(V_1,V_2)|}{2 m}\geq 2\cdot \frac{n-c(G)}{m} = 2\cdot \rho(G)^{-1}.
\end{equation*}
This proves \eqref{eq:lambdamax}. In the same way, one can show that
\begin{equation*}
\lambda_{\max}(G[A])\geq 2\cdot \rho(G[A])^{-1}=2\cdot \rho(A)^{-1}, \quad \text{for all } \emptyset\neq A\subseteq E.
\end{equation*}
Hence, if $G$ is uniformly dense, we can infer that
\begin{equation*}
\lambda_{\max}(G[A])\geq 2\cdot \rho(G)^{-1}, \quad \text{for all }\emptyset\neq A\subseteq E.
\end{equation*}This proves \eqref{eq:lambdamax-stable}.\newline

Similarly, let $\tilde{T}$ be a spanning forest of $G[A]$ for some $\emptyset \neq A\subseteq E$ and write $n_A=\vert V_A\vert$ and $m_A=\vert A\vert$. Then $\tilde{T}$ has $n_A-c(G[A])$ edges and, since it is bipartite, there exists a partition $V_A=V_1\sqcup V_2$ of its vertex set such that all edges of $\tilde{T}$ have one endpoint in $V_1$ and one endpoint in $V_2$. Hence, the number of edges between $V_1$ and $V_2$ in $G$ satisfies
\begin{equation*}
|E(V_1,V_2)|\geq n_A-c(G[A]).
\end{equation*}Also,
\begin{equation*}\label{eq:2delta}
\vol(V_1)+\vol(V_2)=\vol(V_A)\leq \vol_A(V_A)+2|\delta(A)|=2m_A+2|\delta(A)|.
\end{equation*}
Let now $f:V\rightarrow \mathbb{R}$ be defined by
\begin{equation*}
f(v):=\begin{cases}
1, &\text{if }v\in V_1,\\ -1,&\text{if }v\in V_2,\\ 0, &\text{otherwise.}
\end{cases}
\end{equation*}
Then, as in the proof of \cite[Theorem 4.5]{dual}, by the Courant--Fischer--Weyl min-max Principle,
\begin{equation*}
\lambda_{\max}(G)\geq \textrm{RQ}(f)\geq \frac{4\cdot|E(V_1,V_2)|}{\vol(V_1)+\vol(V_2)}\geq  \frac{2\cdot(n_A-c(G[A]))}{m_A+|\delta(A)|} = 2\cdot\rho(A)^{-1}\frac{m_A}{m_A + \vert\delta(A)\vert}.
\end{equation*} 
This proves \eqref{eq:A1}.
\end{proof}    

\begin{example}
Let $G$ be a tree graph. In this case $\rho(G)=1$. Also, $G$ is bipartite and the same holds for $G[A]$, for all $A\subseteq E$. Hence,
\begin{equation*}
        \lambda_{\max}(G)=\lambda_{\max}(G[A])=2,
\end{equation*}and both \eqref{eq:lambdamax} and \eqref{eq:lambdamax-stable} are equalities in this case. 
\end{example}

The next example gives a graph that does not satisfy \eqref{eq:lambdamax-stable} and therefore is not uniformly dense.

\begin{example}\label{example: clique and cycle}
Let $G$ be the graph given by the complete graph on $5$ vertices that shares a common edge with the cycle graph on $20$ vertices (see Figure \ref{fig: example clique and cycle}). 
Then, $c(G)=1$, $\vert V\vert =23$, and $\vert E\vert =29$. If $A$ is the set of edges of the complete graph on $5$ vertices, then
\begin{equation*} 
\lambda_{\max}(G[A]) = \frac{5}{4} < 2\cdot \frac{\vert V\vert-c(G)}{\vert E\vert} = \frac{44}{29}.
\end{equation*}
This shows that $G$ does not satisfy \eqref{eq:lambdamax-stable}. Hence, it is not uniformly dense.
\begin{figure}[h!]    
\centering    \includegraphics[width=0.2\textwidth]{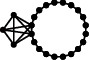}
\caption{The graph in Example \ref{example: clique and cycle}.}
\label{fig: example clique and cycle}
\end{figure}
\end{example}

\begin{remark}
    If $G$ is uniformly dense and $A\subseteq E$ is such that $G[A]$ is obtained from $G$ by simply deleting vertices and the edges in which these vertices are contained, then by Theorem \ref{thm:lambda-max} and by the Cauchy interlacing Theorem,
    \begin{equation*}
        \lambda_{\max}(G)\geq \lambda_{\max}(G[A])\geq 2\cdot \rho(G)^{-1}.
    \end{equation*}However, for a general set $A\subseteq E$, there are no interlacing results for $\lambda_{\max}(G)$ and $\lambda_{\max}(G[A])$.
\end{remark}

\begin{remark}
The last statement in Theorem \ref{thm:lambda-max} tells us that, the bigger the quantity $\rho(G)^{-1}$ is, and therefore the sparser $G$ is, then the bigger $\lambda_{\max}(G[A])$ must be, for all $A\subseteq E$. But, from classical results in spectral graph theory \cite{chung}, the bigger $\lambda_{\max}(G[A])$ is, then the sparser $G[A]$ must be, since the largest eigenvalue of the normalized Laplacian achieves its smallest possible value for complete graphs. This is in line with the interpretation of uniform density in Remark \ref{rmk:sparse}.
\end{remark}

For uniformly dense graphs, we can strengthen the lower bound \eqref{eq:lambdamax} on the largest Laplacian eigenvalue by making use of the following basis packing result of Catlin et al.\
\begin{lemma}[{\cite[Thm.\ 6]{catlin_1992_arboricity}}]\label{lemma: tree packing}
A uniformly dense graph contains $\bigl\lfloor \rho(G)\bigr\rfloor$ edge-disjoint spanning forests.
\end{lemma}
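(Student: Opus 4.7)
The plan is to invoke the matroid base packing theorem of Edmonds and Nash--Williams: a matroid $M$ has $k$ pairwise disjoint bases if and only if
$$
|E \setminus A| \;\geq\; k\bigl(\rank(E) - \rank(A)\bigr) \quad \text{for every } A \subseteq E.
$$
Since the bases of the cycle matroid $M(G)$ are exactly the spanning trees of $G$ (bases of the graphic matroid, i.e.\ spanning forests when $G$ is disconnected), proving the lemma reduces to verifying this inequality for $k = \lfloor \rho(G)\rfloor$.

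First, I would set $k := \lfloor \rho(G)\rfloor$ and treat the trivial case $A = \emptyset$ separately: here $\rank(\emptyset) = 0$ and the required inequality becomes $|E| \geq k \cdot \rank(E)$, which is precisely the statement $\rho(G) \geq k$ and holds by the definition of $k$. For nonempty $A$, uniform density of $G$ (via Theorem~\ref{th: main characterization theorem}, using $\rho(A) \leq \rho(G)$) yields $|A| \leq \rho(G)\, \rank(A)$, and the definition of density gives $|E| = \rho(G)\,\rank(E)$. Combining these,
$$
|E \setminus A| \;=\; |E| - |A| \;\geq\; \rho(G)\bigl(\rank(E) - \rank(A)\bigr) \;\geq\; k\bigl(\rank(E) - \rank(A)\bigr),
$$
where the last inequality uses $\rank(E) - \rank(A) \geq 0$ and $k \leq \rho(G)$. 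This verifies the base packing hypothesis, and invoking the packing theorem produces the $k = \lfloor \rho(G) \rfloor$ edge-disjoint spanning trees.

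The argument is essentially an immediate consequence of the matroid base packing theorem, so there is no real obstacle beyond recognizing that uniform density is exactly the hypothesis needed to make the packing inequality trivial. The only subtlety worth flagging is dimensional: $\rho(G)$ is rational but in general not an integer, which is why the bound $k = \lfloor \rho(G) \rfloor$ rather than $\rho(G)$ itself appears, and indeed this is sharp (e.g.\ for the cycle graph $C_n$ with density $n/(n-1)$, which contains exactly one spanning tree of each fixed type but only $\lfloor n/(n-1)\rfloor = 1$ edge-disjoint spanning tree for $n \geq 2$).
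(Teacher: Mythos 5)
The paper cites this lemma from Catlin et al.\ [Thm.~6] without supplying its own proof, so there is no in-paper argument to compare against; your proof stands on its own and is correct. Invoking Edmonds' matroid base packing theorem is indeed the standard route: for $k=\lfloor\rho(G)\rfloor$, uniform density gives $\vert A\vert \leq \rho(G)\rank(A)$ for every nonempty $A\subseteq E$, which together with $\vert E\vert=\rho(G)\rank(E)$ yields $\vert E\setminus A\vert \geq \rho(G)\bigl(\rank(E)-\rank(A)\bigr) \geq k\bigl(\rank(E)-\rank(A)\bigr)$, and the case $A=\emptyset$ reduces to $\rho(G)\geq k$, which holds by the choice of $k$. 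This verifies the packing hypothesis in full. Your remark that the paper's ``spanning trees'' are spanning forests in the disconnected case (i.e.\ bases of the cycle matroid) is exactly the observation needed to apply the matroid-level packing theorem directly, and your observation that uniform density is precisely what turns the packing inequality into a triviality is the right way to see why this result belongs in the uniformly-dense toolbox. The only tiny blemish is in your side example: the cycle $C_n$ has $\lfloor n/(n-1)\rfloor = 1$ for $n\geq 3$, but for $n=2$ (a multigraph) the density is $2$ and there are two edge-disjoint spanning trees, so ``$n\geq 2$'' should read ``$n\geq 3$''; this does not affect the proof.
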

This leads to the following inequality:
\begin{theorem}
Let $G$ be a uniformly dense graph. Then
\begin{equation}\label{eq: lambdamax for stable graphs}
\lambda_{\max}(G) \geq 2\cdot \rho(G)^{-1} + 2\cdot \frac{\left(\left\lfloor\rho(G)\right\rfloor-1\right)c(G)}{\vert E\vert}.
\end{equation}
\end{theorem}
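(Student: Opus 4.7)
The plan is to apply the Courant--Fischer--Weyl min-max Principle with a bipartition-based test function $f\in\{\pm 1\}$, but with a bipartition that captures strictly more edges than the one used in the proof of Theorem \ref{thm:lambda-max}. The additional crossing edges will come from the $\lfloor \rho(G)\rfloor$ edge-disjoint spanning trees provided by Lemma \ref{lemma: tree packing}.

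Concretely, I would first invoke Lemma \ref{lemma: tree packing} to fix $k:=\lfloor\rho(G)\rfloor$ edge-disjoint spanning ``trees'' $T_1,\dots,T_k$ of $G$ (each is a spanning forest with one spanning tree per component). For each connected component $G_i$ of $G$, the restrictions $T_j\cap E(G_i)$ are $k$ edge-disjoint spanning trees of $G_i$. Since $T_1\cap E(G_i)$ is bipartite, it induces a bipartition $V(G_i)=V_1^{(i)}\sqcup V_2^{(i)}$; setting $V_1:=\bigsqcup_i V_1^{(i)}$ and $V_2:=\bigsqcup_i V_2^{(i)}$ and $f(v):=+1$ on $V_1$, $f(v):=-1$ on $V_2$, one computes $\RQ(f)=2|E(V_1,V_2)|/|E|$ as in the proof of Theorem \ref{thm:lambda-max}.

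The substance of the proof is then to lower-bound $|E(V_1,V_2)|$. In each component $G_i$, the tree $T_1\cap E(G_i)$ contributes all of its $|V(G_i)|-1$ edges to the cut. For every $j\in\{2,\dots,k\}$, the tree $T_j\cap E(G_i)$ is connected and spans both parts $V_1^{(i)}$ and $V_2^{(i)}$ (which are nonempty because isolated vertices are excluded by the definition of $L$), so it must contain at least one edge between the two parts. Edge-disjointness of $T_1,\dots,T_k$ ensures these crossings are counted without repetition, yielding at least $(|V(G_i)|-1)+(k-1)$ crossing edges per component. Summing over the $c(G)$ components gives
\begin{equation*}
|E(V_1,V_2)|\;\geq\; \sum_i\bigl(|V(G_i)|-1+(k-1)\bigr) \;=\; |V|-c(G)+(k-1)\,c(G),
\end{equation*}
and hence
\begin{equation*}
\lambda_{\max}(G)\;\geq\;\RQ(f)\;\geq\;\frac{2(|V|-c(G))}{|E|}+\frac{2(k-1)c(G)}{|E|}\;=\;2\rho(G)^{-1}+\frac{2(\lfloor\rho(G)\rfloor-1)c(G)}{|E|},
\end{equation*}
which is the desired inequality. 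The main technical point is the per-component ``extra crossing'' argument: one must be sure that restricting each $T_j$ to $G_i$ still yields a connected spanning tree of $G_i$ and that the bipartition induced by $T_1\cap E(G_i)$ is nontrivial; both follow from the structure of spanning forests and from the absence of isolated vertices in a graph carrying the normalized Laplacian.
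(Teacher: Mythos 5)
Your proposal is correct and follows essentially the same route as the paper: invoke the tree-packing lemma, take the bipartition from one spanning tree, count the forced crossing edges from the remaining edge-disjoint spanning trees (at least one per component each), and feed the resulting lower bound on $\vert E(V_1,V_2)\vert$ into the Rayleigh quotient. The only difference is that you spell out the per-component argument justifying the ``at least $c(G)$ extra crossings per additional tree'' step, which the paper states without elaboration.
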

\begin{proof}
Let $G$ be a uniformly dense graph with $n$ vertices and $m$ edges. By Lemma \ref{lemma: tree packing}, there exists a set of $\bigl\lfloor \rho(G)\bigr\rfloor\geq 1$ edge-disjoint spanning forests. Let $T$ be any of these forests, with $V=V_1\sqcup V_2$ a vertex partition such that all edges of $T$ are between $V_1$ and $V_2$ and define $f:V\rightarrow\{-1,+1\}$ as in the proof of Theorem \ref{thm:lambda-max}. Then, by the Courant--Fischer--Weyl min-max Principle,
$$
\lambda_{\max}(G) \geq \RQ(f)  = \frac{4\cdot \vert E(V_1,V_2)\vert}{2m},
$$
where $\vert E(V_1,V_2)\vert$ is the number of edges between $V_1$ and $V_2$ in $G$. Since $T$ has all $n-c(G)$ edges between $V_1$ and $V_2$ and every other spanning forest has at least $c(G)$ edges between $V_1$ and $V_2$, we have
$\vert E(V_1,V_2)\vert \geq n-c(G) + (\lfloor \rho(G)\rfloor-1)c(G)$. This completes the proof.
\end{proof}

In general, \eqref{eq: lambdamax for stable graphs} is tighter than \eqref{eq:lambdamax} since $\lfloor \rho(G)\rfloor\geq 1$.

\section{Real representable matroids}\label{section: real matroids}
We now consider real representable matroids, which are matroids whose structure is determined by a real matrix. We show that this additional algebraic structure guarantees that certain $E$-uniform basis measures with nice properties always exist for uniformly dense matroids (Theorem \ref{th: real UD matroids have determinantal measures}). As an application, we show that strictly uniformly dense real matroids can be represented by orthogonal projection matrices with constant diagonal (Theorem \ref{th: representable UD matroids and projection matrices}) and that they are parametrized by a subvariety of the Grassmannian. 

\subsection{Definition and examples}
A matroid $M$ of size $n$ and rank $k$ is \emph{real representable} if there exists a real, full-rank $k\times n$ matrix $X$ with columns indexed by $E(M)$ such that a $k$-sized subset $B\subseteq E(M)$ is a basis if and only if $\det(X_B)\neq 0$, where $X_B$ is the submatrix with columns in $B$. We call $X=X(M)$ a \emph{representation} of $M$ and note that representations are not unique; in particular, scaling each column of $X$ by some nonzero number gives another representation. By the construction above, every full-rank $k\times n$ matrix $X$ determines a matroid $M(X)$. The linear-algebraic rank of a matrix $X$ and a subset of columns $X_A$ is equal to the corresponding rank in the matroid $M(X)$.

We give a number of examples of real representable matroids.
\begin{example}\label{ex: example of real representable matroid}
Let $M$ be the matroid with ground set $[5]$ and bases $\lbrace 123, 134, 234, 135, 345\rbrace$. This matroid is real representable and a matrix and vector representation of $M$ are shown in Figure \ref{fig: matrix and vector representation}.
\begin{figure}[h!]
    \centering    \includegraphics[width=0.6\textwidth]{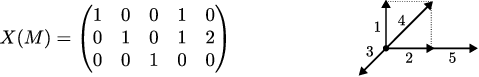}
    \caption{A matrix and vector representation of the matroid in Example \ref{ex: example of real representable matroid}.}
    \label{fig: matrix and vector representation}
\end{figure}
We note that scaling any of the columns of $X$ or, equivalently, any of the vectors by some positive real number $w$ still gives a valid representation. Since $3$ is a coloop and the other elements are not, $M$ is not uniformly dense.
\end{example}
\begin{example}[Incidence matrix]\label{example: incidence matrix}
Let $G=(V,E)$ be a graph and fix an arbitrary bijection $o_e:e=\lbrace u,v\rbrace\rightarrow \lbrace -1,1\rbrace$ for each $e\in E$. The {incidence matrix} of $G$ is the $\vert V(G)\vert\times \vert E(G)\vert$ matrix $\tilde{X}(G)$ with entries
$$
(\tilde{X}(G))_{ve} := \begin{cases}
o_e(v) \text{~if $v\in e$}\\
0 \text{~otherwise}.
\end{cases}
$$
The sum over all rows corresponding to the vertices of a connected component of $G$ is zero, which means that the incidence matrix is rank deficient by $c(G)$. The \emph{reduced incidence matrix} $X(G)$ is obtained by selecting one vertex in every connected component and deleting the corresponding row from the incidence matrix. This is a full-rank matrix that represents the graphic matroid of $G$, i.e.\ $M(X(G))=M(G)$ independent of the choice of $o_e$ and deleted vertices (i.e.\ rows); see for instance \cite{lyons_2003_determinantal, oxley_2011_matroid}. Figure \ref{fig: small graph and reduced incidence matrix} below shows the reduced incidence matrix from the graph in Example \ref{example: spanning trees of the 4 cycle with diagonal}.
\begin{figure}[h!]
    \centering
    \includegraphics[width=0.6\textwidth]{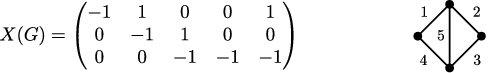}
    \caption{A graph $G$ and its reduced incidence matrix $X(G)$. The row corresponding to the leftmost vertex in $G$ is removed from the incidence matrix.}
    \label{fig: small graph and reduced incidence matrix}
\end{figure}
\end{example}
We consider a second, less standard representation of matroids. A real symmetric matrix $T$ is an orthogonal projection matrix if $T^2=T$. An orthogonal projection matrix $T=T(M)$ is a \emph{projection representation} of matroid $M$ if a $k$-sized subset $B\subseteq E(M)$ is a basis in $M$ if and only if the corresponding $k\times k$ principal submatrix $T_{BB}$ is non-singular. Projection representations are related to classical matrix representations by the identities $T=X^T(XX^T)^{-1}X$ and $\row(X)=\im(T)$, where $\row(~)$ denotes the row span. We also write $T=T(X)$ and $X=X(T)$ to make the relation between representations explicit.
\begin{example}
The projection representation of the matroid in Example \ref{example: spanning trees of the 4 cycle with diagonal} and \ref{example: incidence matrix} equals
$$
T = \frac{1}{8}\begin{pmatrix}
5&-1&-1&3&-2\\
-1&5&-3&1&2\\
-1&-3&5&1&2\\
3&1&1&5&2\\
-2&2&2&2&4
\end{pmatrix}.
$$
All $3\times 3$ principal minors of $T$ are nonzero, except $\det(T_{145,145})=\det(T_{235,235})=0$. These are precisely the non-bases of the matroid $M(T)$. For graphic matroids, this projection matrix is called the transfer current matrix \cite{lyons_2003_determinantal}, and it is a projection onto the cut-space of the graph.
\end{example}

\subsection{Strictly uniformly dense real representable matroids}\label{subsection: SUD real representable}
In this section we show that strictly uniformly dense real representable matroids always have an $E$-uniform basis measure of a special form (see Theorem \ref{th: real UD matroids have determinantal measures}) and a projection representation with constant diagonal. From here on, we will abbreviate (strictly) uniformly dense real representable matroid by \emph{(strictly) uniformly dense real matroid}.

We will make use of the following result; see for instance \cite[Cor. 8.25]{michalek2021invitation} and \cite[Cor. 7.3.9]{sullivant2018algebraic}.
\begin{lemma}\label{lemma: Birch point}
Let the matrix $A\in\mathbb{Z}^{n\times m}$ have rank $n$ and $\mathbbm{1}\in\row(A)$, and let $p\in\mathbb{R}^m_{>0}$. If the linear system $Ay=b$ has a positive solution $y\in\mathbb{R}^m_{>0}$, then it has a unique positive solution $\hat{y}$ of the form $$\hat{y}_k = p_k\prod_{e=1}^n w_e^{a_{ek}},$$ for $k=1,\dots,m$, for some $w\in\mathbb{R}^n_{>0}$.
\end{lemma}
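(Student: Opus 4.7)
The plan is to construct $\hat{y}$ as the unique minimizer of a strictly convex optimization over the fiber $F := \{y \in \mathbb{R}^m_{\geq 0} : Ay = b\}$. The natural objective is the generalized Kullback--Leibler divergence to $p$,
\begin{equation*}
\Phi(y) := \sum_{k=1}^m \Bigl( y_k \log(y_k/p_k) - y_k + p_k \Bigr),
\end{equation*}
interpreted with $0 \log 0 = 0$. This function is continuous on $\mathbb{R}^m_{\geq 0}$ and strictly convex on $\mathbb{R}^m_{>0}$, since its Hessian is the diagonal matrix $\diag(1/y_k)$. Its partial derivatives are $\partial_k \Phi(y) = \log(y_k/p_k)$, which will produce the desired exponential form after exponentiation.

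First I would establish existence. The assumption $\mathbbm{1} \in \row(A)$ gives a vector $c \in \mathbb{R}^n$ with $c^T A = \mathbbm{1}^T$, so every $y \in F$ satisfies $\sum_k y_k = c^T b$, a fixed constant. Hence $F$ is a compact convex set, and by hypothesis it contains a strictly positive point $y^\circ$. Continuity of $\Phi$ then yields a minimizer $\hat{y} \in F$.

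Second I would show $\hat{y}$ lies in the relative interior. Suppose some coordinate $\hat{y}_k$ were zero and move along $v := y^\circ - \hat{y}$, which keeps us inside $F$ by convexity. A direct computation of the difference quotient gives
\begin{equation*}
\lim_{t \to 0^+} \frac{\Phi(\hat{y} + tv) - \Phi(\hat{y})}{t} = -\infty,
\end{equation*}
since each index $k$ with $\hat{y}_k = 0$ and $v_k > 0$ contributes a term of order $v_k \log(tv_k/p_k) \to -\infty$, while all remaining terms stay bounded. This contradicts minimality of $\hat{y}$, so $\hat{y} > 0$. The KKT conditions on the interior minimizer now give Lagrange multipliers $\lambda \in \mathbb{R}^n$ with
\begin{equation*}
\log(\hat{y}_k / p_k) \;=\; (A^T \lambda)_k \;=\; \sum_{e=1}^n \lambda_e \, a_{ek}, \qquad k = 1, \dots, m,
\end{equation*}
and setting $w_e := \exp(\lambda_e) > 0$ produces the claimed form $\hat{y}_k = p_k \prod_e w_e^{a_{ek}}$. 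Uniqueness follows because $\Phi$ is strictly convex on the positive orthant and $F$ is convex.

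The main obstacle I expect is the boundary argument: a priori the minimizer could sit on $\partial F$ where some $\hat{y}_k = 0$ and the KKT formula is ill-defined. The two hypotheses of the lemma are precisely what rescue this step: $\mathbbm{1} \in \row(A)$ forces $F$ to lie in an affine slice with constant coordinate sum (hence compactness), and the existence of a strictly positive feasible point supplies the inward direction $v$ along which $\Phi$ has directional derivative $-\infty$. Without either, the minimizer could fail to exist or could escape to the boundary.
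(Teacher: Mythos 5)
The paper does not prove this lemma; it cites it as a known result (Birch's theorem) from the algebraic statistics and toric geometry literature. Your proof is correct and is essentially the standard variational proof of Birch's theorem found in those references: minimize the generalized Kullback--Leibler divergence to $p$ over the fiber, use $\mathbbm{1}\in\row(A)$ for compactness, show the minimizer is interior via the $-\infty$ directional derivative at the boundary, and read off the exponential (toric) form from the Lagrange stationarity condition. One small point worth making explicit for uniqueness: any positive $y\in F$ of the prescribed form $y_k = p_k\prod_e w_e^{a_{ek}}$ satisfies $\log(y_k/p_k)=(A^T\log w)_k$, i.e.\ it is a stationary point of $\Phi$ on $F$; by strict convexity this forces it to coincide with $\hat{y}$, which is what ``unique positive solution of that form'' actually requires, not merely uniqueness of the minimizer.
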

The point $\hat{y}$ is also called the \emph{Birch point}, and the prescribed form means that $\hat{y}$ lies on the positive real part of the toric variety parametrized by the integer matrix $A$. Lemma \ref{lemma: Birch point} is important in the context of maximum likelihood estimation for log-linear models in algebraic statistics \cite[Ch.\ 7.2]{sullivant2018algebraic}, and in the context of the so-called \emph{moment map}, which maps torus orbits in the Grassmannian to polytopes in Euclidean space, see \cite{gelfand_1987_combinatorial} and \cite[Ch.\ 8.2]{michalek2021invitation}. Here, we use Lemma \ref{lemma: Birch point} to prove the following characterization of strictly uniformly dense real matroids in terms of special kinds of measures. Recall that a matroid is strictly uniformly dense if $(\rho^{-1},\dots,\rho^{-1})$ is a relative interior point of the base polytope.

\begin{theorem}\label{th: real UD matroids have determinantal measures}
Let $M$ be a real representable matroid. Then $M$ is strictly uniformly dense if and only if it has a representation $X$ such that $\mu_X:\mathcal{B}(M)\ni B\mapsto \det(X_B)^2$ is an $E$-uniform basis measure.
\end{theorem}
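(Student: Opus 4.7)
The reverse implication is straightforward: if $\mu_X$ is an $E$-uniform basis measure, then it is automatically positive since $\det(X_B)^2>0$ for every basis $B$, and Theorem~\ref{theorem: rank conditions for strict uniform density} then certifies that $M$ is strictly uniformly dense.

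For the forward direction, the plan is to parametrize representations by column rescalings and reduce the existence of the desired $X$ to a positive toric solution of a linear system, which is exactly the content of Lemma~\ref{lemma: Birch point}. Fix any representation $X_0$ of $M$ and consider the family $X = X_0\cdot\diag(w)$ for $w\in\mathbb{R}_{>0}^n$; each such matrix still represents $M$. The multiplicativity of the determinant gives
$$
\mu_X(B) \;=\; p_B\prod_{e\in B}u_e, \qquad p_B := \det((X_0)_B)^2,\quad u_e := w_e^2.
$$
Let $A\in\{0,1\}^{n\times|\mathcal{B}|}$ be the element--basis incidence matrix $A_{eB}=\mathbbm{1}[e\in B]$. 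Then the condition that $\mu_X$ be $E$-uniform becomes the linear system $Ay = c\,\mathbbm{1}_n$ with unknown $y_B = p_B\prod_{e\in B}u_e$, which is of the toric form prescribed by Lemma~\ref{lemma: Birch point}.

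Suppose first that $M$ is connected. Strict uniform density together with Theorem~\ref{theorem: rank conditions for strict uniform density} supplies a positive $E$-uniform basis measure, i.e.\ a positive solution $y=\mu$ of $Ay=\rho^{-1}\mathbbm{1}_n$. The condition $\mathbbm{1}_{\mathcal{B}}\in\row(A)$ holds because $|B|=\rank(M)$ for every basis, so $\mathbbm{1}_{\mathcal{B}}^T = \frac{1}{\rank(M)}\mathbbm{1}_n^T A$. Finally, in the connected case the left kernel of $A$ is trivial (any $v$ with $\sum_{e\in B}v_e=0$ for all $B$ annihilates the linear span of the matroid polytope, which has codimension one in $\mathbb{R}^n$ precisely when $M$ is connected), so $\rank(A)=n$. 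Lemma~\ref{lemma: Birch point} then produces the required positive weights $u$, and $w_e := \sqrt{u_e}$ yields the desired representation.

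For a general strictly uniformly dense $M$, the plan is to decompose $M=\bigoplus_{j=1}^c M_j$ into connected components. By Theorems~\ref{th: uniform density and direct sum} and~\ref{theorem: rank conditions for strict uniform density}, each $M_j$ is strictly uniformly dense with $\rho(M_j)=\rho(M)$, so the connected case supplies representations $X_j$ with the required property. Taking the block-diagonal representation $X=\bigoplus_j X_j$, the factorization $\det(X_B)=\pm\prod_j \det((X_j)_{B_j})$ makes $\mu_X$ a product of the $\mu_{X_j}$, so that
$$
\mu_X(\{B\ni e\}) \;=\; \mu_{X_j}(\{B_j\ni e\})\prod_{i\ne j}\mu_{X_i}(\mathcal{B}_i) \;=\; \rho(M_j)^{-1}\prod_i \mu_{X_i}(\mathcal{B}_i) \quad\text{for } e\in E_j,
$$
using Lemma~\ref{lemma: foster euler marginal sum} in the last equality; equality of the component densities then makes this constant in $e$. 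The main obstacle is the failure of the full-rank hypothesis of Lemma~\ref{lemma: Birch point} when $M$ is disconnected, and the decomposition into connected components is precisely the tool for circumventing it.
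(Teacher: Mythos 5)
Your proof is correct and follows the paper's core strategy: parametrize representations by positive column rescalings, observe that the $E$-uniformity condition becomes a toric fiber equation for the element--basis incidence matrix $A$, and invoke the Birch-point lemma (Lemma~\ref{lemma: Birch point}) to obtain the distinguished scaling.

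The genuine value-add in your write-up is the handling of the rank hypothesis. The paper's proof asserts outright that $A$ ``has full rank,'' but this is false when $M$ is disconnected: if $M$ has $c$ connected components, the vectors $\mathbbm{1}_{E_j}/\rank(M_j)-\mathbbm{1}_{E_{j'}}/\rank(M_{j'})$ lie in the left kernel of $A$, so $\rank(A)=n-c+1<n$. You noticed this, verified $\rank(A)=n$ in the connected case, and then reduced the general case to the connected one via Theorems~\ref{th: uniform density and direct sum} and~\ref{theorem: rank conditions for strict uniform density} together with a block-diagonal representation. Your computation that $\mu_X(\{B\ni e\})=\rho(M_j)^{-1}\prod_i\mu_{X_i}(\mathcal{B}_i)$ for $e\in E_j$ is correct, and equality of component densities makes it constant. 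This closes a gap in the paper's argument. (An alternative fix, which the paper may have had in mind, is that Birch's theorem does not actually need $\rank(A)=n$ for existence of the toric point --- only for uniqueness of the parameter vector $w$ --- but as Lemma~\ref{lemma: Birch point} is stated with the rank hypothesis, your route is the cleaner one given the tools at hand.) You also cite Theorem~\ref{theorem: rank conditions for strict uniform density} rather than Theorem~\ref{th: main characterization theorem} for the reverse direction, which is the more precise reference since the claim is about \emph{strict} uniform density.

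One small imprecision: in the parenthetical justifying $\rank(A)=n$ you say $v$ ``annihilates the linear span of the matroid polytope, which has codimension one.'' What has codimension one (for connected $M$) is the \emph{affine} span; the \emph{linear} span of the vertices $\{e_B\}$ is all of $\mathbb{R}^n$, because the affine span sits in the hyperplane $\sum_e x_e=\rank(M)$, which misses the origin. That is exactly why the left kernel is trivial, so your conclusion stands, but the wording conflates the two spans.
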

\begin{proof}
The converse direction holds by Theorem \ref{th: main characterization theorem}. We prove the forward direction, starting with connected matroids. Let $M$ be a connected strictly uniformly dense real matroid of size $n$ and rank $k$, with $m:=\vert\mathcal{B}(M)\vert$ bases and with representation $X$. Define the $n\times m$ matrix $A$ with columns given by the indicator vectors $e_B$ of bases of $M$. This matrix has $\mathbbm{1}\in\row(A)$ since every column sums to $k=\rank(M)$. By connectivity and \cite[Prop. 2.4]{feichtner_2005_matroid} the dimension of the base polytope $P(M)\subset\mathbb{R}^n$ is $n-1$. Since $P(M)$ furthermore lies in a hyperplane away from the origin, its vertices (i.e., the columns of $A$) span $\mathbb{R}^n$ and thus $A$ has full rank. Define the $m\times 1$ vector $p$ with entries $p_B :=\det(X_B)^2>0$ for each $B\in\mathcal{B}(M)$. Since $M$ is strictly uniformly dense, there exists a normalized $E$-uniform measure $\mu$ on $\mathcal{B}$ with full support. Define the $m\times 1$ vector $y$ with entries $y_B:=\mu(B)>0$ for each $B\in\mathcal{B}(M)$. By definition of $E$-uniform measures and Lemma \ref{lemma: foster euler marginal sum}, we have $Ay=\rho^{-1}\cdot \mathbbm{1}$. 

Since $y$ is positive, Lemma \ref{lemma: Birch point} implies that there exists a unique positive solution to equation $A\hat{y} = \rho^{-1}\cdot \mathbbm{1}$ of the form $$\hat{y}_B = p_B\prod_{e=1}^nw_e^{a_{eB}} = \det(X_B)^2\prod_{e\in B}w_e$$ for some $w\in\mathbb{R}^n_{>0}$. Here we used that $a_{eB}=1$ if $e\in B$ and zero otherwise. Let $\tilde{X}$ be the matrix $X$ where the $e^{\text{th}}$ column is scaled by $\sqrt{w_e}$; this is still a representation of $M$ and it satisfies
$$
\det(\tilde{X}_B)^2 = \det(X_B)^2\prod_{e\in B}w_e.
$$ 
We can thus write $\hat{y}_B =\det(\tilde{X}_B)^2$. Now let $\mu_{\tilde{X}}(B) := \det(\tilde{X}_B)^2$, then by $A\hat{y}=\rho^{-1}\cdot \mathbbm{1}$ we know that $\mu_{\tilde{X}}$ is an $E$-uniform basis measure corresponding to the representation $\tilde{X}$. This completes the proof for connected matroids. The result for non-connected matroids $M=\bigoplus_{i=1}^\ell M_i$ follows directly from the fact that their representations decompose as $X(M)=[X(M_1)~\cdots~X(M_\ell)]$.
\end{proof}

We will call the measure in Theorem \ref{th: real UD matroids have determinantal measures} a \emph{determinantal measure} of $M$ and write $\mu_{X}$, where $X$ is the distinguished representation guaranteed by the theorem. This terminology refers to the fact that $\mu_X$ is called a \emph{projection determinantal measure} in the context of determinantal point processes, see for instance \cite{lyons_2003_determinantal}. 

In practice, the vector $w$ in Lemma \ref{lemma: Birch point} can be calculated using the so-called ``operator scaling'' algorithm \cite{garg_2018_algorithmic}. This iterative algorithm computes the appropriate scaling of the columns of a representation $X$, such that the corresponding determinantal measure $\mu_X$ in Theorem \ref{th: real UD matroids have determinantal measures} is $E$-uniform. This algorithm is another way to test if a real representable matroid is uniformly dense (see Corollary \ref{corollary: testing uniform density}). Moreover, the theory of operator scaling, for instance Theorem 1.2 in \cite{garg_2018_algorithmic}, guarantees that this approach is also efficient.

The second lemma we use is a linear-algebraic identity that follows from the theory of determinantal point processes; recall that we write $e_S$ for the indicator vector of a subset $S\subseteq [n]$.
\begin{lemma}\label{lemma: point in the polytope is diagonal of matrix}
Let $T$ be an orthogonal projection matrix of size $n$ and rank $k$. Then
\begin{equation}\label{eq: projection lemma}
\sum_{S\in{[n]\choose k}}\det(T_{SS})e_S= \diag(T).
\end{equation}
\end{lemma}
\begin{proof}
A projection determinantal point process corresponding to the orthogonal projection matrix $T$ is a normalized measure $\eta$ on $\binom{[n]}{k}$, that satisfies (see \cite[\S1--2]{lyons_2003_determinantal},\cite[\S1.1]{kassel_2022_determinantal}):
\begin{equation*}
\eta\big(\{S: S\supseteq\mathcal{I}\}\big) = \det(T_{\mathcal{I}\mathcal{I}}) \quad\text{~for all~}\quad\mathcal{I}\subseteq [n].
\end{equation*}
This implies $\eta(S)=\det(T_{SS})$ and the lemma follows by considering for each $i\in[n]$ the $i$th entry of equation \eqref{eq: projection lemma} and using the determinantal expression above for $\mathcal{I}=\{i\}$.
\end{proof}

Combining Theorem \ref{th: real UD matroids have determinantal measures} and Lemma \ref{lemma: point in the polytope is diagonal of matrix}, we find:

\projectionRepresentation

\begin{proof}
(Proof of forward direction.) Let $M$ be a strictly uniformly dense real matroid of size $n$ and rank $k$. By Theorem \ref{th: real UD matroids have determinantal measures}, $M$ has a representation $X$ such that the determinantal measure $\mu_X:\mathcal{B}\ni B\mapsto \det(X_B)^2$ is $E$-uniform. Let $T=T(X)=X^T(XX^T)^{-1}X$ be the corresponding projection representation and observe, by multiplicativity of the determinant, that 
$$
\det(T_{SS})=\det(X_S)\det(XX^T)^{-1}\det(X_S)=\frac{\det(X_S)^2}{\det(XX^T)}, \quad \text{for all }S\in{[n]\choose k}.
$$ 
By the Cauchy--Binet theorem \cite[\S2.5]{kassel_2022_determinantal} we have $
\det(XX^T) =  \sum_{S\in\binom{[n]}{k}}\det(X_S)^2
$, and thus
$$
\det(T_{SS}) = \frac{\det(X_S)^2}{\sum_{S'\in\binom{[n]}{k}}\det(X_{S'})^2},\quad\text{~for all $S\in\binom{[n]}{k}$}.
$$
We can thus write the normalized determinantal basis measure as $
\tilde{\mu}_X:\mathcal{B}\ni B\mapsto \det(T_{BB}).
$ By Lemma \ref{lemma: point in the polytope is diagonal of matrix} we then have
$$
\diag(T) = \sum_{S\in{[n]\choose k}}\det(T_{SS})e_S = \sum_{B\in\mathcal{B}(M)}\tfrac{1}{Z}\det(X_B)^2 e_B,
$$
where $Z=\sum_{B\in\mathcal{B}(M)}\det(X_B)^2$. Looking at the $e^{\text{th}}$ diagonal entry, we find
$$
T_{ee} = \sum_{B\ni e}\tfrac{1}{Z}\det(X_B)^2 = \rho(M)^{-1},
$$
where the second equality follows from Lemma \ref{lemma: foster euler marginal sum}. Thus $T$ has constant diagonal equal to $\rho(M)^{-1}$ which completes the proof of the forward direction.

(Proof of converse direction.) Let $T$ be an orthogonal projection matrix with constant diagonal $k/n$ and let $\tilde{X}=\tilde{X}(T)$. Then $T$ is a projection representation of matroid $M(\tilde{X})$ with density $\rho(M(\tilde{X}))=n/k$. By the same steps as in the forward direction, the normalized measure 
$$
\mu_{T}(B) := \det(T_{BB}) = \tfrac{1}{Z}\det(\tilde{X}_{B})^2 = \mu_{\tilde{X}}(B)
$$ 
is an $E$-uniform measure on $\mathcal{B}(M(\tilde{X}))$ and thus $M(\tilde{X})$ is strictly uniformly dense.
\end{proof}

\begin{example}\label{example: finding determinantal measure} 
Let $M$ be the matroid from Example \ref{example: spanning trees of the 4 cycle with diagonal}. This is the rank $3$ matroid on $[5]$ with non-bases $125$ and $345$. This matroid is graphic, real representable and strictly uniformly dense. Starting from the representation in Figure \ref{fig: small graph and reduced incidence matrix}, i.e.\ the reduced incidence matrix of the associated graph, we use the operator scaling algorithm to calculate the column scaling vector $w=(2,2,2,2,3)$ appearing in Lemma \ref{lemma: Birch point} and the proof of Theorem \ref{th: real UD matroids have determinantal measures}. This gives the representation $$\tilde{X}=\left(\begin{smallmatrix}-\sqrt{2}&\sqrt{2}&0&0&\sqrt{3}\\0&-\sqrt{2}&\sqrt{2}&0&0\\0&0&-\sqrt{2}&-\sqrt{2}&-\sqrt{3} \end{smallmatrix}\right)$$ whose corresponding determinantal measure has $\mu_{\tilde{X}}(B)=12$ for all bases that contain element $5$ in the ground set (the diagonal edge in the corresponding graph) and $\mu_{\tilde{X}}(B)=8$ otherwise. This is an $E$-uniform basis measure for $M$. The projection matrix onto $\row(\tilde{X})$ equals
$$ 
T(\tilde{X}) = \frac{1}{10}\begin{pmatrix}
6&-1&-1&4&-\sqrt{6}\\
-1&6&-4&1&\sqrt{6}\\
-1&-4&6&1&\sqrt{6}\\
4&1&1&6&\sqrt{6}\\
-\sqrt{6}&\sqrt{6}&\sqrt{6}&\sqrt{6}&6
\end{pmatrix}.
$$
This matrix has constant diagonal $3/5$ and its non-singular $3\times 3$ principal submatrices correspond to the bases of $M$. This is the projection representation from Theorem \ref{th: representable UD matroids and projection matrices}.
\end{example}

Finally, we note an application of Theorem \ref{th: representable UD matroids and projection matrices} to the properties of projection matrices.
\begin{corollary}
Let $T$ be a constant-diagonal orthogonal projection matrix of size $n$ and rank $k$. Then for all $0<m<n$, every principal $m\times m$ submatrix $T'$ of $T$ has $\rank(T')\geq m\cdot k/n$.
\end{corollary}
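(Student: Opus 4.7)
The plan is to read off from $T$ a strictly uniformly dense real matroid whose matroid rank function controls the linear-algebraic rank of every principal submatrix, and then apply the density inequality.

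First I would identify the common diagonal value. Since $T$ is an orthogonal projection, $\tr(T)=\rank(T)=k$, and because all $n$ diagonal entries of $T$ are equal, each of them must equal $k/n$. By Theorem \ref{th: representable UD matroids and projection matrices} (applied in the converse direction), $T$ is therefore the projection representation of a strictly uniformly dense real matroid $M$ of size $n$ and rank $k$, with density $\rho(M)=n/k$. This places the assertion of the corollary inside the matroid world: we want a bound on the rank of principal submatrices of the projection matrix in terms of the matroid rank of the corresponding subsets.

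Next I would relate $\rank(T')$ to the matroid rank of the index set $A\subseteq E(M)$ defining $T'=T_{AA}$. Writing $T=X^{T}(XX^{T})^{-1}X$ for a linear representation $X$ of $M$, the principal submatrix satisfies
\[
T_{AA} \;=\; X_{A}^{T}(XX^{T})^{-1}X_{A}.
\]
Since $(XX^{T})^{-1}$ is positive definite, we may write $(XX^{T})^{-1}=C^{T}C$ with $C$ invertible, giving $T_{AA}=(CX_{A})^{T}(CX_{A})$; hence $\rank(T_{AA})=\rank(CX_{A})=\rank(X_{A})$. By the definition of a real representation, $\rank(X_{A})$ equals the matroid rank $\rank_{M}(A)$, so
\[
\rank(T') \;=\; \rank_{M}(A).
\]

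Finally, I would invoke uniform density of $M$. For any nonempty $A\subseteq E(M)$ of size $m$, Theorem \ref{th: main characterization theorem} gives $\rho(A)=m/\rank_{M}(A)\leq\rho(M)=n/k$, which rearranges to $\rank_{M}(A)\geq mk/n$. Combined with the identification $\rank(T')=\rank_{M}(A)$ above, this yields the desired inequality $\rank(T')\geq mk/n$ for every principal $m\times m$ submatrix $T'$ of $T$. The only mildly delicate step is the identity $\rank(T_{AA})=\rank(X_{A})$, but it is a direct consequence of the positive definiteness of $(XX^{T})^{-1}$; the rest of the argument is a routine translation between the matroid picture and the projection-matrix picture.
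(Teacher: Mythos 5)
Your proposal is correct and follows essentially the same route as the paper: identify $M(T)$ via Theorem \ref{th: representable UD matroids and projection matrices}, use $T_{AA}=X_A^T(XX^T)^{-1}X_A$ to equate $\rank(T')$ with the matroid rank $\rank_M(A)$, and conclude by the density inequality. Your explicit justifications — that the diagonal must equal $k/n$ because $\tr(T)=\rank(T)=k$, and that $\rank(T_{AA})=\rank(X_A)$ follows from factoring $(XX^T)^{-1}=C^TC$ with $C$ invertible — make precise two steps the paper leaves implicit.
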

\begin{proof}
Let $T$ be a constant-diagonal orthogonal projection matrix of size $n$ and rank $k$. Following Theorem \ref{th: representable UD matroids and projection matrices}, let $M=M(T)$ be the strictly uniformly dense real matroid represented by $T$ and $X=X(T)$ the corresponding standard representation. Note that $T_{AA}=X_{A}^T(XX^T)^{-1}X_{A}$ for all $A\subseteq [n]$, which implies
$$
\rank(T_{AA})=\rank(X_A)=\rank_{M}(A) \geq \rank_M(E)\frac{\vert A\vert}{\vert E\vert}=k\cdot \vert A\vert/n \text{~for all $\emptyset\neq A\subset E(M)$}.
$$
Here, the first two ranks are linear-algebraic, while the third and fourth rank$_M$ are the rank function in the matroid $M$. The inequality follows from uniform density of $M$.
\end{proof}

\subsection{The variety of uniformly dense real matroids}\label{subsection: moduli space}
As discussed earlier, a full-rank $k\times n$ matrix $X$ determines a real representable matroid $M(X)$. The set of all such matrices modulo elementary row operations is called the real \emph{Grassmannian} $\operatorname{Gr}(n,k)$; equivalently, this is the set of $k$-dimensional subspaces of $\mathbb{R}^n$. Points in the Grassmannian parametrize the real representable matroids. The embedding $p:\Gr(n,k)\hookrightarrow \mathbb{P}^{{n\choose k}-1}$ maps a matrix $X\in\Gr(n,k)$ onto its $k$-minors $p_B(X) := \det(X_B)$ for each $B\in{[n]\choose k}$, up to a common non-zero scaling of all minors. These minors $p_B(X)$ are called the \emph{Pl\"{u}cker coordinates} of $X$, and the vanishing of certain quadratic polynomials in these coordinates (the Pl\"{u}cker relations) defines the Grassmannian as a variety in $\mathbb{P}^{{n\choose k}-1}$; see for instance \cite[Ch.\ 5]{michalek2021invitation} for more details.

Following Theorem \ref{th: real UD matroids have determinantal measures}, every strictly uniformly dense real matroid $M$ has a distinguished representation $X$ whose Pl\"{u}cker coordinates ($p_B(X)=\sqrt{\mu_X(B)}$) satisfy the quadratic relations $$\sum_{B\ni e}p^2_B(X)=\sum_{B\ni e'}p^2_B(X),\quad \text{for all }e\neq e'\in E(M).$$ This means that every strictly uniformly dense real matroid has a representation in the following projective subvariety of the Grassmannian:
$$
\mathcal{V}(n,k) := \left\lbrace p\in\Gr(n,k) \,:\, \sum_{e\in B\in{[n]\choose k}} p_B^2 - \sum_{(e+1)\in B\in{[n]\choose k}}p_B^2=0\text{~for $e=1,\dots,n-1$} \right\rbrace \subseteq\mathbb{P}^{{n\choose k}-1}.
$$
Conversely, every point in $\mathcal{V}(n,k)$ corresponds to a matrix $X$ whose non-zero Pl\"{u}cker coordinates determine the bases of a strictly uniformly dense real matroid $M(X)$. In other words, the variety $\mathcal{V}(n,k)$ parametrizes the strictly uniformly dense real matroids.

A different description follows from Theorem \ref{th: representable UD matroids and projection matrices}. Let $T$ be an $n\times n$ symmetric matrix with constant diagonal $k/n$. This is a projection matrix if and only if it satisfies $T^2 - T = 0$; in other words, it is defined by the vanishing of ${n+1\choose 2}$ quadratic polynomials in its ${n\choose 2}$ off-diagonal entries $\lbrace t_{ij}\rbrace_{1\leq i<j\leq n}$. Another description of $\mathcal{V}(n,k)$ is thus as the affine variety
$$
\mathcal{V}(n,k) = \left\lbrace t\in\mathbb{R}^{{n\choose 2}} \,:\, T^2-T = 0\right\rbrace,
$$
where $T$ is the symmetric matrix with constant diagonal $k/n$ and off-diagonal entries $t_{ij}$. Every point in $\mathcal{V}(n,k)$ is the projection representation of a strictly uniformly dense real matroid. The algebro-geometric properties of the variety $\mathcal{V}(n,k)$ in its projective and affine embedding are further studied in \cite{devriendt_2024_grassmannian}.

\section*{Acknowledgements}
We are grateful to Bernd Sturmfels, Benjamin Schr\"{o}ter and Harry Richman for helpful comments, discussions and suggestions. Raffaella Mulas is supported by the Dutch Research Council (NWO) through the grant VI.Veni.232.002.


\end{document}